\documentclass[a4paper]{amsart}
\usepackage[a4paper, margin=2cm]{geometry}
\usepackage[british]{babel}

\usepackage{amsmath}
\usepackage{amssymb}
\usepackage{amsthm}
\usepackage{calc}
\usepackage{enumitem}
\usepackage{graphicx}
\usepackage{url}
\usepackage{xcolor}
\usepackage{etoolbox}
\usepackage{tabularx}
\usepackage{mathtools}
\usepackage{hyperref}

\hypersetup{
  colorlinks   = true, %Colours links instead of ugly boxes
  urlcolor     = {blue!50!black}, %Colour for external hyperlinks
  linkcolor    = {blue!50!black}, %Colour of internal links
  citecolor   = {red!50!black} %Colour of citations
}

\usepackage{cleveref}

\numberwithin{equation}{section}

\theoremstyle{plain}
\newtheorem{thm}{Theorem}[section]

\newtheorem{prop}[thm]{Proposition}
\newtheorem{lem}[thm]{Lemma}
\newtheorem{conj}[thm]{Conjecture}
\newtheorem{fact}[thm]{Fact}
\newtheorem*{lem*}{Lemma}
\newtheorem*{cor*}{Corollary}
\newtheorem*{thm*}{Theorem}
\newtheorem*{fact*}{Fact}

\newtheorem{manualthminner}{Theorem}
\newenvironment{manualthm}[1]{%
  \IfBlankTF{#1}
    {\renewcommand{\themanualthminner}{\unskip}}
    {\renewcommand\themanualthminner{#1}}%
  \manualthminner
}{\endmanualthminner}

\theoremstyle{definition}
\newtheorem{defn}[thm]{Definition}
\newtheorem{eg}[thm]{Example}
\newtheorem{notn}[thm]{Notation}
\newtheorem{qn}[thm]{Question}

\theoremstyle{remark}
\newtheorem{rem}[thm]{Remark}
\newtheorem*{term*}{Terminology}

\DeclareMathOperator{\Age}{Age}

\DeclareMathOperator{\Aut}{Aut}
\DeclareMathOperator{\Can}{Can}
\DeclareMathOperator{\dom}{dom}

\DeclareMathOperator{\id}{id}
\DeclareMathOperator{\im}{im}

\DeclareMathOperator{\qftp}{qftp}

\newcommand{\N}{\mathbb{N}}
\newcommand{\Q}{\mathbb{Q}}
\newcommand{\R}{\mathbb{R}}

\newcommand{\ic}{\mathbin{\bot}}
\newcommand{\mb}[1]{\mathbb{#1}}
\newcommand{\mc}[1]{\mathcal{#1}}

\newcommand{\all}{\forall\,}
\newcommand{\ex}{\exists\,}
\newcommand{\sub}{\subseteq}

\newcommand{\fin}{\subseteq_{\text{fin\!}}}

\newcommand{\ra}{\rightarrow}

\newcommand{\la}{\leftarrow}

\newcommand{\Mod}[1]{(\mathrm{mod}\ #1)}

\newcommand{\eps}{\varepsilon}

\newcommand{\Fr}{Fra\"{i}ss\'{e} }

\newcommand{\Jarik}{Ne\v{s}et\v{r}il }
\newcommand{\NR}{Ne\v{s}et\v{r}il-R\"{o}dl }

\allowdisplaybreaks

\subjclass[2020]{05C55, 03C15, 05D05, 05D10, 05C63}

\keywords{sunflower, delta-system, vertex partition, Ramsey theory, ultrahomogeneous}

\makeatletter
\def\author@andify{%
  \nxandlist {\unskip ,\penalty-1 \space\ignorespaces}%
    {\unskip {} \@@and~}%
    {\unskip \penalty-2 \space \@@and~}%
}
\makeatother

\author{Rob Sullivan}
\address{\parbox{\linewidth}{Rob Sullivan\\
Institute of Computer Science, Czech Academy of Sciences\\
Pod Vodárenskou věží 271/2,\\
182 00 Prague\\
Czech Republic
}}
\email{robertsullivan1990+maths@gmail.com}

\author{Jeroen Winkel}
\address{Jeroen Winkel}
\email{winkeljeroen+maths@gmail.com}

\thanks{The first author is funded by Project 24-12591M of the Czech Science Foundation (GA\v{C}R)}

\title{Structured sunflowers and canonical Ramsey properties}

\begin{document}

\begin{abstract}
    A first-order structure $M$ is said to have the \emph{infinite sunflower property} if, for each $k \in \N_+$ and each structure $M' \cong M$ whose elements are $k$-sets, there is $S \sub M'$, $S \cong M$, such that $S$ is a \emph{sunflower}: a collection of sets such that each pair of elements has the same intersection. A class $\mc{K}$ of finite structures is said to have the \emph{finite sunflower property} if for all $k \in \N_+$ and $B \in \mc{K}$, there is $C \in \mc{K}$ such that any structure $C' \cong C$ whose elements consist of $k$-sets contains a copy of $B$ which is a sunflower. These two notions were introduced by Ackerman, Karker and Mirabi in a recent paper, and give a structural generalisation of the well-known Erd\H{o}s-Rado sunflower lemma for sets. We show two results for countable ultrahomogeneous relational structures with strong amalgamation: first, the infinite sunflower property is equivalent to the canonical infinite point-Ramsey property; second, a certain strengthening of the canonical finite point-Ramsey property implies the finite sunflower property. (Here, ``canonical" refers to statements analogous to the Erd\H{o}s-Rado canonical Ramsey theorem, involving colourings with infinitely many colours.) We also show that all free amalgamation classes with a single vertex isomorphism-type have the finite sunflower property, as do many classes of finite metric spaces, and we give a variety of further examples and observations.
\end{abstract}

\maketitle

\section{Introduction} \label{s: Introduction}

A \emph{sunflower}, or \emph{$\Delta$-system}, is a collection $S$ of sets such that any pair of sets in $S$ have the same intersection. We call the elements of $S$ \emph{petals}. In \cite{ER60}, Erd\H{o}s and Rado proved the following result, often known as the \emph{sunflower lemma}, with applications in combinatorics, set theory and computer science (see \cite{ALWZ20}):
\begin{fact*}[{\cite[Thm.\ 1]{ER60}}] \hfill
    \begin{enumerate}[label=(\roman*)]
    \item For all $k \in \N_+$, any countably infinite collection of sets, each of size $k$, contains a sunflower with infinitely many petals.
    \item For all $b, k \in \N_+$, there exists $c \in \N_+$ such that any collection of $c$-many sets, each of size $k$, contains a sunflower with $b$ petals.
\end{enumerate}     
\end{fact*}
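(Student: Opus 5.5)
Both parts go by induction on $k$, the size of the sets; part (ii) is the quantitative version of the same argument. The inductive step splits into two cases: either a large subfamily is pairwise disjoint, which is already a sunflower with empty kernel, or some single element lies in many of the sets. In the second case we remove that element and apply the inductive hypothesis to sets of size $k-1$.

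For (i), the base case $k=1$ is immediate: distinct singletons are pairwise disjoint. For $k>1$, let $\mathcal{F}$ be a countably infinite family of $k$-sets. By Zorn's lemma (or greedily, since $\mathcal{F}$ is countable) choose a maximal pairwise disjoint subfamily $\mathcal{D}\sub\mathcal{F}$. If $\mathcal{D}$ is infinite, we are done. Otherwise $U=\bigcup\mathcal{D}$ is finite, and by maximality every member of $\mathcal{F}$ meets $U$. By the pigeonhole principle some $x\in U$ lies in infinitely many members of $\mathcal{F}$. The family $\{A\setminus\{x\}: x\in A\in\mathcal{F}\}$ is an infinite family of $(k-1)$-sets, and distinct sets $A$ give distinct sets $A\setminus\{x\}$. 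By induction it contains an infinite sunflower with kernel $Y$. Adding $x$ back to each petal gives an infinite sunflower in $\mathcal{F}$ with kernel $Y\cup\{x\}$, since the petals' pairwise intersections all become $Y\cup\{x\}$.

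For (ii), we prove the bound $c(b,k)=k!\,(b-1)^k+1$ by the same induction. Take $c(b,1)=b$, since $b$ distinct singletons form a sunflower. For $k>1$, let $\mathcal{F}$ have more than $k!(b-1)^k$ sets and choose a maximal disjoint subfamily $\mathcal{D}$. If $|\mathcal{D}|\ge b$, we are done. Otherwise $|U|\le k(b-1)$, and every set in $\mathcal{F}$ meets $U$. By pigeonhole some $x$ lies in more than
\[
\frac{k!\,(b-1)^k}{k(b-1)}=(k-1)!\,(b-1)^{k-1}
\]
sets. We then remove $x$ and apply induction as before. Alternatively, (ii) can be derived from (i) by compactness, but this direct count is cleaner and gives explicit bounds.

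The only subtle points are bookkeeping. Removing $x$ must not identify distinct sets, which holds because every set involved contains $x$. The kernel must pass correctly through the reinsertion of $x$: petals $A_i\setminus\{x\}$ with common pairwise intersection $Y$ give petals $A_i$ with common pairwise intersection $Y\cup\{x\}$. The main obstacle, such as it is, is the case analysis via a maximal disjoint family. Maximality is what guarantees that every set meets the small set $U$, and hence that the pigeonhole step applies.
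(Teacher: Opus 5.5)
Your proof is correct. It is the classical Erd\H{o}s--Rado argument: a maximal pairwise disjoint subfamily either already supplies the petals, or its finite union meets every set, so pigeonhole gives a point lying in many sets, which you delete before inducting on $k$. This yields the standard bound $k!\,(b-1)^k+1$. The paper only cites this fact without proof, but the same dichotomy (a pairwise disjoint copy, or a common intersection to remove before inducting on $k$) is exactly the structure it reuses in the proof of (iii)$\implies$(i) of Theorem A and of part (ii) of Theorem B.
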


(Erd\H{o}s and Rado in fact proved much more: they considered infinite cardinals in general, and gave an explicit bound for $c$ in the finite case.)

In this paper, we investigate a generalisation of the above to the case of first-order structures, extending results of \cite{AKM25}. In \cite[Def.\ 6]{AKM25}, Ackerman, Karker and Mirabi introduce the notion of \emph{structured sunflowers}:

\begin{defn}
    Let $k \in \N_+$. We refer to sets of size $k$ as \emph{$k$-sets}. We say that a first-order structure $M$ is a \emph{structure on $k$-sets} if each element of the domain of $M$ is a $k$-set. Let $S$ be a structure on $k$-sets. We say that $S$ is a \emph{structured sunflower} if there is a set $c$ with $v \cap v' = c$ for all distinct $v, v' \in S$. We call $c$ the \emph{core} of the sunflower.
\end{defn}

Our central aim is to determine under what conditions structured sunflowers are guaranteed to exist.

\begin{defn}[{\cite[Def.\ 6]{AKM25}}]
    Let $k \in \N_+$. Let $M$ be an infinite first-order structure. We say that $M$ has the \emph{infinite $k$-sunflower property} if each structure on $k$-sets $M'$ with $M' \cong M$ contains a structured sunflower $S \cong M$. We say that $M$ has the \emph{infinite sunflower property} if it has the infinite $k$-sunflower property for all $k \in \N_+$.
\end{defn}

(In \cite{AKM25}, the infinite $k$-sunflower property is referred to as being \emph{$k$-sunflowerable}.)

\begin{defn}[{\cite[Def.\ 49]{AKM25}}]
    Let $k \in \N_+$. Let $\mc{K}$ be a hereditary class of finite structures. We say that $\mc{K}$ has the \emph{finite $k$-sunflower property} if for each $B \in \mc{K}$ there is $C \in \mc{K}$ such that, for any structure on $k$-sets $C'$ with $C' \cong C$, we have that $C'$ contains a structured sunflower $B' \cong B$. We call such $C$ a \emph{witness} for $B$ of the finite $k$-sunflower property. We say that $\mc{K}$ has the \emph{finite sunflower property} if $\mc{K}$ has the finite $k$-sunflower property for all $k \in \N_+$.
\end{defn}

By a standard Ramsey-theoretic compactness argument, if a countable relational structure $M$ has the infinite $k$-sunflower property, then its age (the class of finite structures embeddable in $M$) has the finite $k$-sunflower property: see \cite[Thm.\ 53]{AKM25}.

Rephrased in our terminology, the classical results of Erd\H{o}s and Rado state that the countably infinite pure set has the infinite sunflower property, and the class of finite sets has the finite sunflower property.

We primarily discuss countable first-order structures (often in a relational language), and we will mostly be concerned with \Fr structures and classes. For us, a \emph{\Fr structure} is a countably infinite ultrahomogeneous structure, and we refer to the class of finite structures embeddable in a \Fr structure as a \emph{\Fr class}. (Recall that a countable structure $M$ is \emph{ultrahomogeneous} if each isomorphism between finitely generated substructures of $M$ extends to an automorphism of $M$.) We assume the reader is familiar with the basic theory here -- see \cite{Mac11}, \cite{Hod93} for more background. 

\subsection*{Canonical Ramsey properties}

Recall the \emph{canonical infinite Ramsey theorem} of Erd\H{o}s and Rado (below, we write $X^{(r)}$ for the set of $r$-subsets of $X$):
\begin{fact*}[{\cite[Thm.\ 1]{ER50}}]
    Let $r \in \N_+$ and $\chi : \omega^{(r)} \to \omega$. Then there are $I \sub r$ and infinite $M \sub \omega$ such that for all $A, A' \in M^{(r)}$, enumerated as $a^{}_0 < \cdots < a^{}_{r-1}$, $a'_0 < \cdots < a'_{r-1}$ using the natural ordering $<$ on $\omega$, we have $\chi(A) = \chi(A')$ if and only if $a^{}_i = a'_i$ for all $i \in I$. 
\end{fact*}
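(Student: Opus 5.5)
The plan is to prove the canonical Ramsey theorem by induction on $r$. The two tools are the ordinary infinite Ramsey theorem (finitely many colours) and an iterated diagonalisation over $\omega$. The induction peels off one coordinate at a time. An alternative that avoids nested induction first reduces to a finite-colour Ramsey statement on $2r$-sets, and then uses the finitely many resulting patterns to read off $I$. I will follow this second route.

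\textbf{Step 1 (finite colouring of pairs of $r$-sets).} For $A, A' \in \omega^{(r)}$, the union $A \cup A'$ has size $m$ with $r \le m \le 2r$, and the relative order of the elements of $A$ and $A'$ inside $A \cup A'$ is described by one of finitely many \emph{order types} $\tau$. For each $m \le 2r$, colour an $m$-set $X$ as follows: for each order type $\tau$ of a pair $(A,A')$ with $A \cup A' = X$ realising $\tau$, record one bit, namely whether $\chi(A) = \chi(A')$. This is a colouring of $\omega^{(m)}$ with finitely many colours. Applying the infinite Ramsey theorem successively for $m = r, \dots, 2r$ gives an infinite $M_0 \sub \omega$ on which, for each order type $\tau$, either all pairs of type $\tau$ receive equal colours or all receive distinct colours.

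\textbf{Step 2 (determining $I$).} Let $I$ be the set of $i<r$ such that every $\tau$ with $a_i \neq a'_i$ is an ``unequal'' type. We must then show that two $r$-sets of the same colour agree on the coordinates in $I$, and conversely that agreement on $I$ forces equal colour. For the converse, the key lemma is: if some type $\tau$ with $a_j \ne a'_j$ is an ``equal'' type, then $\chi$ ignores coordinate $j$. That is, any two sets differing only at position $j$, in any way consistent with the ordering, get the same colour. I would prove this by chaining. Given two sets differing only in coordinate $j$, use the infinitude of $M_0$ to insert intermediate $r$-sets so that consecutive sets realise type $\tau$, or its reverse (which is also equal, by symmetry of equality). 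Since the set of types is finite, one may need to pass to a further subset, e.g.\ every other element of $M_0$, to leave room for the interleaving. Once every $j \notin I$ is shown to be ignorable, changing the coordinates outside $I$ one at a time (again via intermediate sets in the thinned $M_0$) shows that agreement on $I$ implies equal colour.

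\textbf{Step 3 (the forward direction and the main obstacle).} It remains to show that if $a_i \ne a'_i$ for some $i \in I$, then $\chi(A) \ne \chi(A')$. By the definition of $I$ and the homogeneity from Step 1, this holds whenever the pair's type has $a_i \neq a'_i$, because all such types are unequal. So the forward direction is immediate.

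The real obstacle is the chaining argument in Step 2. It requires care to ensure that arbitrary pairs differing in an ignored coordinate can be connected through pairs of the single known equal type $\tau$. If $\tau$ also moves other coordinates, one must combine it with the ignorability of those coordinates, which suggests processing the ignored coordinates in a suitable order. If this becomes messy, I would fall back to the classical induction on $r$, using Ramsey's theorem with the colours ``$\chi(A)=\chi(A')$ or not'' restricted to pairs sharing their first $r-1$ elements.
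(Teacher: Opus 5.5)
The paper gives no proof of this statement; it quotes it from Erd\H{o}s--Rado, so your outline has to stand on its own. Step 1 is fine: it is the standard reduction to a colouring of the $\le 2r$-sets with finitely many colours. With your definition of $I$, the implication ``$\chi(A)=\chi(A')\Rightarrow a_i=a'_i$ for all $i\in I$'' is indeed immediate.

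However, the whole content of the theorem then sits in your key lemma, and you give only a heuristic for it. The lemma says: if some equal type moves coordinate $j$, then the elementary type $\sigma_j$ is equal, where $\sigma_j$ is the type of two sets differing only in position $j$. As you note yourself, chaining through copies of $\tau$ does not obviously isolate coordinate $j$ when $\tau$ moves other coordinates too. Your suggestion to ``process the ignored coordinates in a suitable order'' is circular as it stands: it needs some coordinate that can be shown ignorable from $\tau$ alone, and nothing in the proposal produces one. For instance, the shift type $(x_0,x_1,x_2)\mapsto(x_1,x_2,x_3)$ with $r=3$ moves every coordinate, and your sketch gives no indication of how a chain of shifts moves the middle coordinate alone. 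The fallback to ``the classical induction on $r$'' is only named, not carried out. So there is a genuine gap.

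The missing idea is a length-two chain through a common base. Suppose $\tau$ is equal, realised by $(A,B)$, with $b_j\notin A$. Let $B'$ agree with $B$ except that $b_j$ is replaced by another point in the same open interval of $A\cup(B\setminus\{b_j\})$. Then $(A,B')$ again has type $\tau$, so $\chi(B)=\chi(A)=\chi(B')$, and $(B,B')$ has type $\sigma_j$. Every finite order configuration embeds in the infinite set $M_0$, and types are homogeneous there, so $\sigma_j$ is equal.

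To make sure such a coordinate is available, let $J=\{j:\sigma_j\text{ is equal}\}$ and $K=r\setminus J$.
\begin{enumerate}
\item Sets agreeing on $K$ have equal colour: move from $A$ to $(\max(a_i,a'_i))_{i<r}$ one coordinate at a time, in decreasing order of $i$, and likewise from $A'$. Each step is a $\sigma_i$ move with $i\in J$.
\item Suppose some equal type moved a coordinate in $K$. Realise it in $M_0$ by $(A,A')$ together with copies $A^*,A'^*$ whose $J$-coordinates are replaced by fresh points; by the previous item this does not change colours. The $K$-parts still differ, so some $a_k$ with $k\in K$ lies in one set and not the other, say $a_k\notin A'^*$. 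The common-base trick, applied to the equal pair $(A'^*,A^*)$, then makes $\sigma_k$ equal, a contradiction.
\end{enumerate}
Hence the equal types are exactly those fixing $K$, and $K$ coincides with your $I$. So $\chi$ is already canonical on $M_0$, and the thinning you mention is not needed.
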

This theorem generalises the infinite Ramsey theorem to the case of $\omega$-many colours: one may not be able to find a monochromatic infinite set, but one is guaranteed an infinite set on which the colouring behaves in a canonical way. Note that, as with the infinite Ramsey theorem, a corresponding canonical finite Ramsey theorem follows via a compactness argument (see \cite[Section 1.5]{GRS91}).

As with structural versions of Ramsey's theorem (for example, the \NR theorem of \cite{NR77} and its generalisation due to Hubi\v{c}ka and \Jarik in \cite{HN19}), structural analogues of the above Erd\H{o}s-Rado canonical Ramsey theorem have been studied before: see for example \cite{NR78}, \cite{ENR84}, \cite{PV82}, \cite{PV83}, \cite{PV85}, \cite{Voi84}, \cite{Voi85}, \cite{DMT17}, \cite{Mas19}. In this paper, we shall only be concerned with canonical Ramsey properties for colourings of points:

\begin{defn} \label{d: heterochromatic}
    Let $M$ be a first-order structure. Given a colouring of the elements of $M$, we say that $N \sub M$ is \emph{heterochromatic} if each element of $N$ has a different colour.
\end{defn}
\begin{defn} \label{d: cpRP}
    Let $M$ be a \Fr structure. We say that $M$ has the \emph{canonical infinite point-Ramsey property} if:
    \begin{enumerate}
        \item[] for each colouring $\chi : M \to \omega$, there is a monochromatic or a heterochromatic copy of $M$.
    \end{enumerate}
    Let $\mc{K}$ be a \Fr class. We say $\mc{K}$ has the \emph{canonical point-Ramsey property} if:
    \begin{enumerate}
        \item[] for all $B \in \mc{K}$, there exists $C \in \mc{K}$ such that, for each colouring $\chi : C \to \omega$, there is a monochromatic or a heterochromatic copy of $B$ in $C$.
    \end{enumerate}
    (One could also define these properties more generally for countable structures and hereditary classes.)
\end{defn}

\subsection*{Main results}

The two main theorems of this paper are the following:
\begin{manualthm}{A}
    Let $M$ be a relational \Fr structure with strong amalgamation. Then the following are equivalent:
    \begin{enumerate}[label=(\roman*)]
        \item $M$ has the infinite sunflower property;
        \item $M$ has the infinite $2$-sunflower property;
        \item $M$ has the canonical infinite point-Ramsey property.
    \end{enumerate}
\end{manualthm}
\begin{manualthm}{B}
    Let $\mc{K}$ be a \Fr class. Then the following hold:
    \begin{enumerate}[label=(\roman*)]
        \item if $\mc{K}$ has the finite $2$-sunflower property, then it has the canonical point-Ramsey property;
        \item if $\mc{K}$ has the \emph{very} canonical point-Ramsey property, then it has the finite sunflower property.
    \end{enumerate}
\end{manualthm}

See also Proposition \ref{p: equiv cipRP}, where we give two other characterisations of the canonical infinite point-Ramsey property for relational \Fr structures with strong amalgamation, one of which is a strengthening of the well-known property of indivisibility.

We briefly explain some of the terminology used in the statements of the above two theorems. Recall that a \Fr class $\mc{K}$ has \emph{strong amalgamation} if, for all pairs of embeddings $B_0 \la A \ra B_1$ in $\mc{K}$, there exists an amalgam $B_0 \ra C \la B_1$ in $\mc{K}$ such that the images of $B_0$, $B_1$ intersect exactly in the image of $A$. (Some authors refer to this as \emph{disjoint amalgamation}.) We also say that a \Fr limit has strong amalgamation if its age does.

The \emph{very canonical point-Ramsey property} mentioned in Theorem B is defined in Definition \ref{d: vcpRP}. The name is consciously ad hoc: it is a strengthening of the canonical point-Ramsey property, specifically concocted to imply the finite sunflower property and to hold for a variety of examples.

We conjecture that an analogous finite version of Theorem A should also be true, improving on Theorem B: namely, we conjecture that the finite sunflower property is in fact equivalent to the canonical point-Ramsey property, with no ad hoc strengthening required. Unfortunately, we were not able to show this. See Section \ref{s: further qns}.

\subsection*{Structure of the paper}

We prove Theorem A in Section \ref{s: inf SP}. First, we introduce two properties, the \emph{galah property} (Definition \ref{d: galah}) and \emph{local replicability} (Definition \ref{d: top loc replic}) which are equivalent to the canonical infinite point-Ramsey property in the case of relational \Fr structures with strong amalgamation (Proposition \ref{p: equiv cipRP}). We then use these equivalences in the proof of Theorem A, and subsequently give a variety of examples and further observations. In Section \ref{s: finite SP}, we prove Theorem B, and we show that \Fr classes with free amalgamation and a single vertex isomorphism-type have the finite sunflower property (Proposition \ref{p: vertex-tr free amalg have finite SP}), as do many classes of finite metric spaces (Proposition \ref{p: vcpRP for Sms}).

\subsection*{The relationship to previous results.} 

The notion of structured sunflowers was introduced in \cite{AKM25} (also considered in the algebraic setting in \cite{AM25}), where some sufficient conditions were given for a structure to have the infinite sunflower property: in the terminology of Subsection \ref{ss: galah locally repl}, Ackerman, Karker and Mirabi showed in \cite{AKM25} that an indivisible locally replicable ultrahomogeneous structure has the infinite sunflower property, assuming the size of the structure to be a regular cardinal. Applying this, they showed for regular cardinals $\kappa$ that $\kappa$-saturated models of the theory of the dense linear order without endpoints have the infinite sunflower property, as do $\kappa$-saturated models of the theory of the Rado graph. They also showed that \Fr structures with the $3$-DAP (see Definition \ref{d: 3-DAP}) have the infinite sunflower property, and determined which $\kappa$-scattered linear orders and which countable linear orders have the infinite sunflower property.

\section{The infinite sunflower property for countably infinite structures} \label{s: inf SP}

\subsection{The galah property}

Recall the following well-known partition properties (originating in \cite{Fr00}, \cite{Cam90}):

\begin{defn} \label{d: indiv pigeonhole}
    Let $M$ be a relational structure. We say that:
    \begin{enumerate}[label=(\alph*)]
        \item \label{i: pigeonhole prop} $M$ has the \emph{pigeonhole property} if, for each partition $(C, D)$ of $M$, we have $C \cong M$ or $D \cong M$;
        \item \label{i: indiv} $M$ is \emph{indivisible} if, for each partition $(C, D)$ of $M$, we have that $C$ contains a copy of $M$ or $D$ contains a copy of $M$;
        \item \label{i: age-indiv} $M$ is \emph{age-indivisible} if, for each partition $(C, D)$ of $M$, we have $\Age(C) = \Age(M)$ or $\Age(D) = \Age(M)$.
    \end{enumerate}
    It is immediate that \ref{i: pigeonhole prop} $\implies$ \ref{i: indiv} and \ref{i: indiv} $\implies$ \ref{i: age-indiv}.
\end{defn}

We introduce a new property intermediate between the pigeonhole property and indivisibility, which to the best of our knowledge has not been studied before:

\begin{defn} \label{d: galah}
    Let $M$ be a relational structure. We say that $M$ has the \emph{galah property}\footnote{A \emph{galah} is a common Australian bird, one half of which resembles a pigeon. See Figure \ref{f:galah}.} if, for each partition $(C, D)$ of $M$, we have that $C \cong M$ or $D$ contains a copy of $M$.
\end{defn}

It is immediate that:
\begin{enumerate}[label=(\roman*)]
    \item \label{i: pigeonhole implies galah} any structure with the pigeonhole property has the galah property;
    \item \label{i: galah implies indiv} any structure with the galah property is indivisible.
\end{enumerate}

\begin{eg} \label{e: rg rt rorg}
    A short argument (see \cite[Prop.\ 3]{Cam97}) shows that the random graph has the pigeonhole property, and the same argument shows that the random $n$-hypergraph, random tournament and random oriented graph also have the pigeonhole property (see \cite{BD99}). Hence these structures all have the galah property.
\end{eg}
\begin{rem}
    Cameron (\cite[Prop.\ 4]{Cam97}) showed that the only countable graphs with the pigeonhole property are the Rado graph, $K_\omega$ and its complement. Bonato, Cameron and Deli\'{c} (\cite{BCD00}) showed that the only countable tournaments with the pigeonhole property are the random tournament, $\omega^\alpha$ and its reverse, where $\alpha$ is a countable non-zero ordinal. They also classified the preorders (quasi-orders) with the pigeonhole property: in particular, they showed that the partial orders (of any cardinality) with the pigeonhole property are exactly: infinite antichains, and $\omega^\alpha$ and its reverse for every non-zero ordinal $\alpha$.
\end{rem}
\begin{eg} \label{e: implications strict}
    We give the following examples showing that the reverse directions of the two implications \ref{i: pigeonhole implies galah}, \ref{i: galah implies indiv} above do not hold in general:
    \begin{enumerate}[label=(\alph*)]
        \item \label{ex: Q} $(\Q, <)$ has the galah property, but not the pigeonhole property;
        \item \label{ex: Hn} the generic $K_n$-free graph $\mathbb{H}_n$ ($n \geq 3$) is indivisible, but does not have the galah property;
        \item \label{ex: eq rln} Let $\mb{E}$ be a structure on a countable set given by an equivalence relation consisting of infinitely many infinite equivalence classes. Then $\mb{E}$ is indivisible, but does not have the galah property.
    \end{enumerate}
    \ref{ex: Q}: We first show the galah property. If $(C, D)$ is a partition of $\Q$ with $C \not\cong (\Q, <)$, then $C$ has an endpoint or there are $a, b \in C$ with $(a, b) \cap C = \varnothing$, so it is immediate that $D$ contains a copy of $(\Q, <)$. To see that $(\Q, <)$ does not have the pigeonhole property, take the partition $C = \{q \in \Q \mid q < 0\} \cup \{1\}$, $D = \Q \setminus C$.
    
    \ref{ex: Hn}: Indivisibility of $\mb{H}_n$ for $n \geq 3$ was first proved in \cite{ES89}, with the case $n = 3$ shown earlier in \cite{KR86}. To see that $\mb{H}_n$ does not have the galah property: take a vertex $v$ in $\mb{H}_n$, let $D$ be the set of vertices adjacent to $v$, and let $C = \mb{H}_n \setminus D$. Then $D$ does not contain any $K_{n-1}$, thus does not contain a copy of $\mb{H}_n$, and $C$ does not contain any vertex adjacent to $v \in C$, so $C \not\cong \mb{H}_n$.

    \ref{ex: eq rln}: Let $A_0, A_1, \cdots$ be the equivalence classes of $\mb{E}$. Take $v \in A_0$, and let $C = \{v\} \cup \bigcup_{i \geq 1} A_i$, $D = \mb{E} \setminus C = A_0 \setminus \{v\}$. Then $C \not\cong \mb{E}$ and $D$ does not contain a copy of $\mb{E}$, so $\mb{E}$ does not have the galah property. It is straightforward to check that $\mb{E}$ is indivisible.

    Note that example \ref{ex: eq rln} shows that the galah property is not preserved by lexicographic products, as $\mb{E}$ is the lexicographic product of an infinite set of inequivalent points with an infinite equivalence class, both of which have the galah property. Indivisibility is preserved under lexicographic products: see \cite[Proposition 2.21]{Mei16}.
\end{eg}

\begin{rem}
    Asymmetric versions of partition properties have been studied before. In \cite{ES91} the authors define that an infinite relational structure $M$ is \emph{weakly indivisible} if for each vertex partition $(C, D)$ of $M$, we have that $C$ contains a copy of $M$ or $\Age(D) = \Age(M)$. See also \cite{Sau14}.
\end{rem}

\begin{figure}
    \centering
    \includegraphics[width=0.3\linewidth]{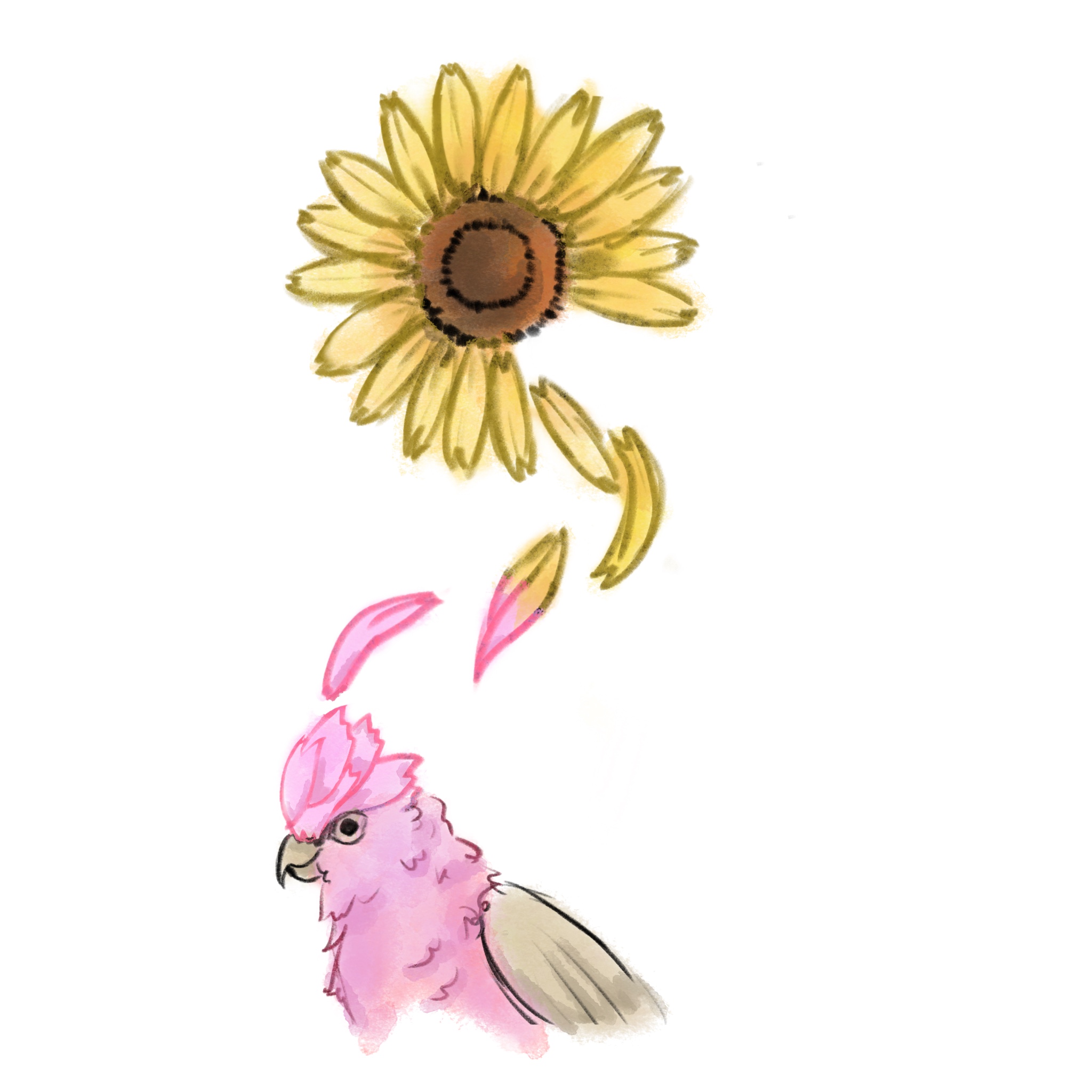}
    \caption{A galah and a sunflower. See Proposition \ref{p: equiv cipRP} and Theorem A.}
    \label{f:galah}
\end{figure}

\subsection{The galah property and local replicability} \label{ss: galah locally repl}

We first show some basic properties of structures with the galah property. 

\begin{lem} \label{l: galah implies strong amalg}
    Let $M$ be a relational structure with the galah property. Then:
    \begin{enumerate}[label=(\roman*)]
        \item \label{galah vt} all vertices of $M$ have the same quantifier-free type;
        \item \label{galah str amalg} if $M$ is a \Fr structure, then $M$ has strong amalgamation.
    \end{enumerate}
\end{lem}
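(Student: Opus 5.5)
For \ref{galah vt}, I will argue by contradiction with a two-piece partition. Suppose the vertices of $M$ do not all share a quantifier-free type. Pick a vertex $v$ and let $C$ be the set of vertices with the same quantifier-free type as $v$, and $D = M \setminus C$; both are non-empty. A copy of $M$ inside $D$ is impossible: any embedding $M \to M$ preserves quantifier-free types of vertices, so the image of $v$ would have the type of $v$ and hence lie in $C$. Likewise $C \cong M$ is impossible: every vertex of $C$ has the type of $v$, whereas $M$ contains a vertex of another type, and an isomorphism preserves types. Hence neither alternative of the galah property holds, a contradiction. (If one prefers, one can instead take $C$ to be the complement; the argument is symmetric, and this case needs no Fra\"iss\'e hypothesis.)

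For \ref{galah str amalg}, I will use the standard characterisation: a Fra\"iss\'e structure $M$ has strong amalgamation if and only if it has no algebraicity, i.e.\ for every finite $A \subseteq M$ and $b \in M \setminus A$, the orbit of $b$ under the pointwise stabiliser $\Aut(M)_A$ is infinite. (Equivalently, the orbits of $\Aut(M)_A$ on $M \setminus A$ are all infinite.) So I will suppose that there are a finite $A$ and a $b \notin A$ whose orbit $O$ under $\Aut(M)_A$ is finite, and derive a contradiction. Let $F = A \cup O$, which is finite and contains $b$.

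The partition I would try is $D = F$, $C = M \setminus F$. Since $M$ is infinite and $D$ is finite, $D$ contains no copy of $M$. So the galah property forces $C \cong M$, and I must show this fails. The idea is that $M \setminus F$ should witness the failure of an extension property of $M$ that $C$ would need. Concretely, consider the finite substructure on $A \cup \{b\}$. Since $C \cong M$ and $M$ is ultrahomogeneous, $C$ is itself Fra\"iss\'e with the same age. Choose a copy $A'$ of $A$ inside $C$, and use the extension property of $C$ to find a realisation in $C$ of the type of $b$ over $A'$ in many ways. The obstacle is that $A' \neq A$, so I cannot directly compare with $O$.

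To fix this, I would choose the partition more cleverly so that $A$ itself lies in $C$: set $D = O$ and $C = M \setminus O \supseteq A$. Again $D$ is finite, so the galah property gives $C \cong M$ via some isomorphism $\sigma: M \to C$. Then $\sigma^{-1}$ restricted to $A$ is an isomorphism onto the finite substructure $\sigma^{-1}(A) \subseteq M$, and by ultrahomogeneity I may compose with an automorphism of $M$ to arrange that the embedding $\iota = \sigma: M \to M$ with image $C$ fixes $A$ pointwise. (Precisely: take $g \in \Aut(M)$ extending $A \to \sigma^{-1}(A)$, and replace $\sigma$ by $\sigma \circ g$, which still has image $C$ and fixes $A$.) Now $\iota(b)$ realises the same quantifier-free type over $A$ as $b$, so by ultrahomogeneity $\iota(b) \in O$. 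But $\iota(b) \in C = M \setminus O$, a contradiction. The main obstacle I anticipate is exactly this step: ensuring that the self-embedding fixes $A$ pointwise, so that the orbit $O$ is respected. It is handled by the ultrahomogeneity adjustment above. Note also that the case $A = \varnothing$ is excluded, since every $\Aut(M)$-orbit on $M$ is infinite as $M$ is infinite and, by \ref{galah vt} together with ultrahomogeneity, transitive.
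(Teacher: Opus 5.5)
Your proof is correct. Part (i) is exactly the paper's argument; you simply spell out why neither alternative of the galah property can hold.

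For part (ii) you take a different route. You reduce strong amalgamation to absence of algebraicity, i.e.\ every orbit of $\Aut(M)_A$ on $M \setminus A$ is infinite. You then apply the galah property to $(M \setminus O, O)$ for a putative finite orbit $O$ of $b$. Finally, ultrahomogeneity turns the isomorphism $M \to M \setminus O$ into a self-embedding fixing $A$ pointwise, and this embedding must send $b$ into $O$, a contradiction.

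The paper instead checks the strong extension property directly. For finite $V$ disjoint from $A \fin M$, the galah property applied to $(M \setminus V, V)$ gives $M \setminus V \cong M$. The extension property of $M \setminus V \supseteq A$ then embeds $B$ over $A$ while avoiding $V$, and strong amalgamation follows in two lines.

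Both arguments rest on the same observation: deleting a finite set leaves a copy of $M$. Your adjustment $\sigma \mapsto \sigma \circ g$ is in effect a re-derivation of the extension property of $M \setminus O$ over $A$. What the paper's version buys is self-containedness. The direction of the algebraicity characterisation you need (no finite orbits implies strong amalgamation) is a standard but nontrivial fact that you use as a black box. By contrast, the passage from the strong extension property to strong amalgamation is immediate.

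Two parts of your write-up can be dropped: the discarded first partition $(M \setminus (A \cup O), A \cup O)$, and the closing remark about $A = \varnothing$, which the general argument already covers.
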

\begin{proof}
    \ref{galah vt}: We show the contrapositive. Take $a \in M$, let $C$ be the set of vertices of $M$ with the same quantifier-free type as $a$, and let $D = M \setminus C$. Then $(C, D)$ violates the galah property.

    \ref{galah str amalg}: It suffices to show the strong extension property, namely: for each embedding $f : A \to B$ in $\Age(M)$ with $A \fin M$ and for each $V \fin M$ with $A \cap V = \varnothing$, there exists an embedding $f' : B \to M$ with $f' \circ f = \id_A$ and $\im(f') \cap V = \varnothing$. Consider the vertex partition $(M \setminus V, V)$ of $M$. By the galah property $M \setminus V \cong M$, so by the extension property of $M \setminus V$ there is an embedding $f' : B \to M \setminus V$ with $f' \circ f = \id_A$, and $f'$ is as required.
\end{proof}

\begin{rem}
    There exist indivisible \Fr structures without strong amalgamation: see \cite[Theorem 4(b)]{ES91}.
\end{rem}

\begin{defn} \label{d: top loc replic}
    Let $M$ be a countable relational structure. We define the following topology on $M$: for each finite substructure $A \fin M$ and $b \in M \setminus A$, we specify a basic open set $\{v \in M \mid \qftp(v / A) = \qftp(b / A)\}$, where $\qftp$ refers to the quantifier-free type. Note that this topology is metrisable: enumerate the finite substructures of $M$ as $A_0, A_1, \cdots$, and for distinct $v, w \in M$, define $d(v, w) = \frac{1}{i+1}$, where $i$ is least such that $\qftp(v/A_i) \neq \qftp(w/A_i)$. (Also define $d(v, w) = 0$ for $v = w$).

    If every non-empty open set in $M$ contains a copy of $M$, we say that $M$ is \emph{locally replicable}.
    
    (This was called \emph{universal duplication of quantifier-free types} in \cite[Def.\ 13]{AKM25}.)
\end{defn}

\begin{lem} \label{l: galah iff loc repl}
    Let $M$ be a relational \Fr structure. Then $M$ is locally replicable if and only if $M$ has the galah property.
\end{lem}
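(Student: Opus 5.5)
We prove the two implications separately. Throughout we use the extension property of the \Fr structure $M$: a countable structure $N$ with $\Age(N) \sub \Age(M)$ is isomorphic to $M$ if and only if, for every finite $A \fin N$ and every one-point extension $A \cup \{b\}$ in $\Age(M)$, there is a realisation of $\qftp(b/A)$ in $N$.

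\emph{Galah $\Rightarrow$ locally replicable.} Take a non-empty basic open set $U = \{v \in M \mid \qftp(v/A) = \qftp(b/A)\}$, with $A \fin M$ and $b \in M \setminus A$. We apply the galah property to the partition $(C, D)$ with $D = U$ and $C = M \setminus U$. Then $C$ contains $A$ but no realisation of $\qftp(b/A)$ in $M$. Any such realisation would lie in $U$, and since $b \notin A$ it cannot lie in $A$. Hence the extension property fails in $C$ for the pair $A$, $A \cup \{b\}$, so $C \not\cong M$. By the galah property, $D = U$ contains a copy of $M$, as required.

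\emph{Locally replicable $\Rightarrow$ galah.} Let $(C, D)$ be a partition of $M$ with $C \not\cong M$. Since $\Age(C) \sub \Age(M)$, the characterisation above gives some $A \fin C$ and a one-point extension type $p = \qftp(b/A)$, with $b \in M$ and $b \notin A$, that has no realisation in $C$. As $M$ realises $p$, the basic open set $U = \{v \in M \mid \qftp(v/A) = p\}$ is non-empty. Every realisation of $p$ lies outside $C$, so $U \sub D$. By local replicability, $U$ contains a copy of $M$, and hence so does $D$.

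The only point needing care is the use of the extension property in one-point form for substructures of $M$. We must allow the new point's type over $A$ to be any type realised in $M$, which is exactly what ultrahomogeneity supplies. We also need $b \notin A$, so that $U$ is genuinely a basic open set as defined; the equality type is part of the quantifier-free type, which ensures realisations avoid $A$. No strong amalgamation assumption is needed, which is consistent with the statement.
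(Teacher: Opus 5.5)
Your proof is correct and matches the paper's argument. For locally replicable $\Rightarrow$ galah, you extract from $C \not\cong M$ a type over some $A \fin C$ that is unrealised in $C$, so the corresponding basic open set lies in $D$. For galah $\Rightarrow$ locally replicable, you apply the galah property to the partition $(M \setminus U, U)$, which the paper phrases as a contrapositive; your explicit remark that the equality part of the type keeps $A$ inside $M \setminus U$ fills a detail the paper leaves implicit.
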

\begin{proof}
    $\Rightarrow$: Let $(C, D)$ be a partition of $M$. Suppose that $C \not\cong M$. So $C$ does not satisfy the extension property for $\Age(M)$ (including over the empty structure), and thus there is $A \fin C$ and a quantifier-free type $p$ over $A$ which is realised in $M$ but not in $C$. So $\{v \in M \mid \qftp(v / A) = p\} \sub D$, and as $\{v \in M \mid \qftp(v / A) = p\}$ is an open set, by assumption it contains a copy of $M$.

    $\Leftarrow$: We prove the contrapositive. Let $U = \{v \in M \mid \qftp(v / A) = p\}$ be a basic open set of $M$ not containing a copy of $M$, where $A \fin M$ and $p$ is a quantifier-free type. Let $V = M \setminus U$. As $A \sub V$ and there is no vertex of $V$ with type $p$, we have that $V$ does not have the extension property for $\Age(M)$. So $V \not\cong M$.
\end{proof}

\begin{rem}
    The $\Rightarrow$ direction of this lemma does not hold for first-order languages in general: the generic meet-semilattice is locally replicable but does not have the galah property. See Subsection \ref{ss: meet-semilattice} and the proof of Lemma \ref{l: S inf SP}.
\end{rem}

Note that Lemma \ref{l: galah iff loc repl} together with Lemma \ref{l: galah implies strong amalg} imply that any relational \Fr structure which is locally replicable must have strong amalgamation.

\begin{lem} \label{l: lr implies cipRP}
    Let $M$ be a relational \Fr structure. If $M$ is locally replicable, then $M$ has the canonical infinite point-Ramsey property.
\end{lem}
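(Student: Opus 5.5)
Given $\chi : M \to \omega$, I would split into two cases according to whether some colour class contains a copy of $M$. If some class $\chi^{-1}(n)$ contains a copy of $M$, that copy is monochromatic and we are done. So the work is in the case where no colour class contains a copy of $M$; there I will build a heterochromatic copy of $M$ by a back-and-forth style construction.

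The key observation comes from local replicability. Every non-empty open set $U$ of $M$ contains a copy of $M$, so $U$ is not contained in any single colour class, nor in any finite union of colour classes. For the finite union, I would use Lemma \ref{l: galah iff loc repl}: $M$ has the galah property, hence is indivisible. Moreover any copy $N \sub U$ of $M$ is itself locally replicable, because $N \cong M$ and the property is isomorphism-invariant. Applying indivisibility of $N$ repeatedly, a finite colouring of $N$ (the finitely many given colours plus one ``other'' colour) has a monochromatic copy of $M$. By our case assumption that copy cannot lie in one of the given colour classes, so $U$ contains points whose colour avoids any prescribed finite set $F \sub \omega$.

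Now enumerate $M = \{m_0, m_1, \dots\}$ and build an embedding $g : M \to M$ step by step, using ultrahomogeneity via the extension property. Suppose $g$ is defined on $\{m_0, \dots, m_{i-1}\}$ with image $A$ and heterochromatic. The set of $v \in M$ with $\qftp(v/A)$ equal to the required type (the image under $g$ of $\qftp(m_i/\{m_0,\dots,m_{i-1}\})$) is non-empty by the extension property. It is therefore a basic open set, provided we also exclude $v \in A$. That exclusion is automatic: the type is non-algebraic because $m_i \notin \{m_0,\dots,m_{i-1}\}$, so the type of $b$ over $A$ with $b \notin A$ defines the open set. By the observation above, this set contains a point $v$ whose colour lies outside the finite set $\chi(A)$. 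Set $g(m_i) = v$. The union of these maps is an embedding whose image is a heterochromatic copy of $M$.

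The only delicate step is the claim that an open set avoids finitely many colours. Single colours are immediate from local replicability, but finite unions need indivisibility applied iteratively to a copy of $M$ inside $U$, together with the case hypothesis that no colour class contains a copy of $M$. Once that is in place, the rest is a routine forth construction.
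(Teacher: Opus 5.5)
Your proposal is correct and is essentially the paper's argument, arranged directly rather than by contrapositive. The paper assumes there is no heterochromatic copy, so the (implicit) forth construction gets stuck at some heterochromatic $A$, giving a basic open set $U$ all of whose colours lie in $\chi(A)$; it then finds a monochromatic copy of $M$ inside a copy of $M$ in $U$ using indivisibility. You instead make the forth construction explicit and use the same copy-of-$M$-plus-indivisibility step, with an extra ``other'' colour, to show that the construction never gets stuck.
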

\begin{proof}
    Let $\chi : M \to \omega$. If there is no heterochromatic copy of $M$, then the extension property for $\Age(M)$ fails on heterochromatic sets. Thus there is heterochromatic $A \fin M$ and a basic open set $U = \{v \in M \mid \qftp(v/A) = p\}$ such that each $v \in U$ has the same colour as some point of $A$. As $M$ is locally replicable, there is $M' \sub U$, $M' \cong M$, and $M'$ has the galah property by Lemma \ref{l: galah iff loc repl} and is thus indivisible. The colouring $\chi|_{M'}$ only takes finitely many values, and so there is a monochromatic copy of $M$ inside $M'$. 
\end{proof}

\begin{notn}
    Given a quantifier-free type $p$ with parameter set $A$, enumerated as $\bar{a}$, and given an embedding $f$ of $A$, we write $f \cdot p$ for the type $\{\phi(x, f(\bar{a})) \mid \phi(x, \bar{a}) \in p\}$.
\end{notn}
\begin{defn}
    We say that a relational \Fr structure $M$ is \emph{transitive} if the natural permutation action $\Aut(M) \curvearrowright M$ is transitive; equivalently, $M$ is transitive if all vertices of $M$ have the same quantifier-free type.
\end{defn}

\begin{prop} \label{p: equiv cipRP}
    Let $M$ be a relational \Fr structure with strong amalgamation. Then the following are equivalent:
    \begin{enumerate}[label=(\roman*)]
        \item \label{i: cipRP} $M$ has the canonical infinite point-Ramsey property;
        \item \label{i: GP} $M$ has the galah property;
        \item \label{i: lr} $M$ is locally replicable.
    \end{enumerate}
\end{prop}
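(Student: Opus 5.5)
The plan is to close a cycle of implications. Two of them are already available: \ref{i: GP} $\Leftrightarrow$ \ref{i: lr} is Lemma \ref{l: galah iff loc repl}, and \ref{i: lr} $\Rightarrow$ \ref{i: cipRP} is Lemma \ref{l: lr implies cipRP}. Neither of these uses strong amalgamation. So the only new implication to prove is \ref{i: cipRP} $\Rightarrow$ \ref{i: lr}, and this is where strong amalgamation will be used.

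For \ref{i: cipRP} $\Rightarrow$ \ref{i: lr}, argue by contraposition. Suppose some non-empty basic open set $U = \{v \in M \mid \qftp(v/A) = p\}$ contains no copy of $M$. By ultrahomogeneity, for every embedding $f : A \to M$ the set
\[
U_f = \{v \in M \mid \qftp(v/f(A)) = f \cdot p\}
\]
is the image of $U$ under an automorphism of $M$. Hence no $U_f$ contains a copy of $M$. There are only countably many such $f$; enumerate them as $f_0, f_1, \ldots$ and put $W_n = U_{f_n} \setminus \bigcup_{m<n} U_{f_m}$. Define a colouring $\chi : M \to \omega$ by giving every point of $W_n$ the colour $2n$, and giving the points outside $\bigcup_n U_{f_n}$ pairwise distinct odd colours.

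Now check that $\chi$ has neither a monochromatic nor a heterochromatic copy of $M$.
\begin{itemize}
\item A monochromatic copy of $M$ cannot have an odd colour, since $M$ is infinite. So it lies inside some $W_n \subseteq U_{f_n}$, which is impossible.
\item Suppose $M' \subseteq M$ is a heterochromatic copy of $M$. Then $M'$ meets each $W_n$ in at most one point. Since $M' \cong M$, there is an embedding $f : A \to M'$, and $f = f_n$ for some $n$. Because the language is relational, quantifier-free types computed in $M'$ agree with those computed in $M$. So every realisation of $f \cdot p$ in $M'$ lies in $U_{f_n} \subseteq W_0 \cup \cdots \cup W_n$, and therefore $f \cdot p$ has at most $n+1$ realisations in $M'$.
\end{itemize}

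The remaining step, and the main point where strong amalgamation is used, is to see that $f \cdot p$ has infinitely many realisations in $M'$; this gives the contradiction. The type $p$ is realised in $M$ by a point outside $A$, as $U$ is a basic open set. Since $M' \cong M$ also has strong amalgamation, its strong extension property lets us realise $f \cdot p$ in $M'$ while avoiding any prescribed finite set. Iterating this yields infinitely many distinct realisations, as required.

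Hence \ref{i: cipRP} fails whenever \ref{i: lr} fails, which closes the cycle.
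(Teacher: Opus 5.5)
Your proof is correct and follows the paper's route for (i) $\Rightarrow$ (iii): colour each point by the least index $n$ with the point realising $f_n \cdot p$ over $f_n(A)$, rule out a monochromatic copy because each $U_{f_n}$ is an automorphic image of $U$, and rule out a heterochromatic copy because strong amalgamation gives infinitely many realisations of $f_n \cdot p$ inside it. The only difference is that the paper first handles the non-transitive case separately (colouring by vertex type), so that every point lies in some $U_{f_n}$; your distinct odd colours for the uncovered points treat both cases at once.
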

\begin{proof}
    \ref{i: GP} $\iff$ \ref{i: lr} is Lemma \ref{l: galah iff loc repl}, and \ref{i: lr} $\implies$ \ref{i: cipRP} is Lemma \ref{l: lr implies cipRP}.
    
    We now show \ref{i: cipRP} $\implies$ \ref{i: lr}. We prove the contrapositive. Suppose that $M$ is not locally replicable. First, consider the case where $M$ is not transitive. Define a colouring of $M$ by colouring each vertex with its isomorphism type. Then there is no monochromatic copy of $M$. As $M$ has strong amalgamation, each vertex isomorphism type must have infinitely many realisations in $M$, and so there is also no heterochromatic copy of $M$ and \ref{i: cipRP} does not hold. It remains to consider the case where $M$ is transitive.
    
    Let $U = \{v \in M \mid \qftp(v / A) = p\}$ be a basic open set of $M$ not containing a copy of $M$, where $A \fin M$ and $p$ is a quantifier-free type. Enumerate all embeddings $A \to M$ as $f_0, f_1, \cdots$. Define a colouring $\chi : M \to \omega$, $\chi(v) = \min\{i \in \omega \mid \qftp(v / f_i(A)) = f_i \cdot p\}$. (Note that there is an embedding $f : A \to M$ with $\qftp(v / f(A)) = p$ as $M$ is transitive.) As $M$ is ultrahomogeneous, each $f_i$ extends to an automorphism of $M$, and so as $U$ does not contain a copy of $M$, there is likewise no monochromatic copy of $M$. Suppose for a contradiction that there is a heterochromatic copy $N$ of $M$. As $N \cong M$ and $M$ has strong amalgamation, there is $j < \omega$ with $f_j(A) \sub N$ and infinitely many vertices $v \in N$ with $\qftp(v / f_j(A)) = f_j \cdot p$. But each of these vertices has a colour $\leq j$, contradicting that $N$ is heterochromatic.
\end{proof}

\subsection{Characterising the infinite sunflower property for \Fr structures}

\begin{lem} \label{l: inf 2SP implies cipRP}
    Let $M$ be a \Fr structure with the infinite $2$-sunflower property. Then $M$ has the canonical infinite point-Ramsey property.
\end{lem}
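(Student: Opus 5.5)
Given a colouring $\chi : M \to \omega$, I will encode it as a structure on $2$-sets isomorphic to $M$, and then read off a monochromatic or heterochromatic copy of $M$ from the structured sunflower provided by the infinite $2$-sunflower property. Nothing here needs strong amalgamation, or even that $M$ is relational; we only need $M$ to be countably infinite, which holds since it is a \Fr structure.

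First fix an injection $\iota : M \to \omega$ whose image is disjoint from $\omega$ itself; for instance, replace colours $n$ by $(0,n)$ and vertices by tags $(1,\iota(v))$ so the two kinds of element cannot collide. Define $\sigma(v) = \{(0,\chi(v)), (1,\iota(v))\}$. This is a $2$-set, and $\sigma$ is injective because the second coordinate determines $v$. Let $M'$ be the structure on $2$-sets obtained by transporting the structure of $M$ along $\sigma$, so that $\sigma : M \to M'$ is an isomorphism.

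By the infinite $2$-sunflower property, $M'$ contains a structured sunflower $S \cong M$ with some core $c$. Since $M$ is infinite, $S$ has infinitely many petals. The tags $(1,\iota(v))$ are pairwise distinct, so the core contains no tag. Hence either $c = \varnothing$ or $c = \{(0,n)\}$ for a single colour $n$.

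Let $N = \sigma^{-1}(S) \sub M$, which is isomorphic to $S$ and hence to $M$. If $c = \{(0,n)\}$, then every petal contains $(0,n)$, so every vertex of $N$ has colour $n$ and $N$ is monochromatic. If $c = \varnothing$, then distinct petals are disjoint, so distinct vertices of $N$ have distinct colour elements $(0,\chi(v))$, and $N$ is heterochromatic. In either case we obtain the canonical infinite point-Ramsey property.

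The only real care point is the bookkeeping in the encoding. Colour labels and vertex tags must lie in disjoint sets, and each petal must have size exactly $2$ so that the sunflower dichotomy on the core is clean. Once that is set up, the argument is direct.
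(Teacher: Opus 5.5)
Your proof is correct and is essentially the paper's argument: the paper likewise encodes each vertex $v$ as the $2$-set $\{v, \chi(v)\}$ (it assumes the domain of $M$ is disjoint from the image of $\chi$ rather than tagging explicitly), and reads off a monochromatic or heterochromatic copy according to whether the core is a single colour or empty. One wording slip: ``an injection $\iota : M \to \omega$ whose image is disjoint from $\omega$'' is self-contradictory as written, but the tagging by $(0,n)$ and $(1,\iota(v))$ that follows makes the intended construction clear and valid.
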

\begin{proof}
    Let $\chi : M \to \omega$ be a colouring. Note that we may assume for convenience that the domain of $M$ does not intersect the image of $\chi$. Define a structure $N$ with domain $\{\{v, \chi(v)\} \mid v \in M\}$, with relations given by specifying that the map $f : M \to N$, $f(v) = \{v, \chi(v)\}$ is an isomorphism. As $N$ has the infinite $2$-sunflower property by assumption, there is a structured sunflower $S \sub N$ with $S \cong M$. If the core of the sunflower $S$ is non-empty, then it consists of an integer $r$: thus $f^{-1}(S) \sub M$ is a copy of $M$ of colour $r$. If the core of $S$ is empty, then $f^{-1}(S)$ is heterochromatic.
\end{proof}

We now prove our main theorem characterising the infinite sunflower property:
\begin{manualthm}{A}
    Let $M$ be a relational \Fr structure with strong amalgamation. Then the following are equivalent:
    \begin{enumerate}[label=(\roman*)]
        \item \label{i: inf SP thm A} $M$ has the infinite sunflower property;
        \item \label{i: inf 2SP thm A} $M$ has the infinite $2$-sunflower property;
        \item \label{i: cipRP thm A} $M$ has the canonical infinite point-Ramsey property.
    \end{enumerate}
\end{manualthm}
Recall from Proposition \ref{p: equiv cipRP} that for $M$ a relational \Fr structure with strong amalgamation, we have:
\[
M \text{ has the canonical infinite point-Ramsey property}
\iff M \text{ has the galah property}
\iff M \text{ is locally replicable.}
\]
\begin{proof}[Proof of Theorem A]
    \ref{i: inf SP thm A} $\implies$ \ref{i: inf 2SP thm A} is trivial, and \ref{i: inf 2SP thm A} $\implies$ \ref{i: cipRP thm A} is Lemma \ref{l: inf 2SP implies cipRP}. We now show \ref{i: cipRP thm A} $\implies$ \ref{i: inf SP thm A}. We show by induction on $k$ that $M$ has the infinite $k$-sunflower property for all $k \in \N_+$. The case $k = 1$ is trivial. Let $k > 1$, and let $N$ be a structure on $k$-sets isomorphic to $M$. If $N$ contains a copy of $M$ whose domain consists of pairwise disjoint sets, then this forms a sunflower with empty core and we are done. Otherwise, there is $A \fin N$ with domain consisting of pairwise disjoint sets and a quantifier-free type $p$ over $A$ such that, for each $v \in N$ with $\qftp(v / A) = p$, we have $v \cap \bigcup A \neq \varnothing$. As $M$ is locally replicable by Proposition \ref{p: equiv cipRP}, there is a copy $N'$ of $M$ inside $\{v \in N \mid \qftp(v / A) = p\}$. Colour each $v \in N'$ with the colour $v \cap \bigcup A$. As $M$ is indivisible ($M$ has the galah property by Proposition \ref{p: equiv cipRP}, and the galah property implies indivisibility), there is $N'' \sub N'$, $N'' \cong M$, such that all elements of $N''$ have the same intersection with $\bigcup A$. Let $B$ denote this intersection, and let $W = \{v \setminus B \mid v \in N''\}$. Define a structure on the set $W$ by specifying that $N'' \to W$, $v \mapsto v \setminus B$ is an isomorphism: thus $W \cong M$. As $B \neq \varnothing$, all sets inside $W$ have size $k - |B|$. By the induction assumption $W$ contains a structured sunflower, and thus so does $N'' \sub N$.
\end{proof}

\begin{rem} \label{r: lr indiv implies inf SP}
    The same proof as in \ref{i: cipRP thm A} $\implies$ \ref{i: inf SP thm A} above shows that any \Fr structure which is both locally replicable and indivisible has the infinite sunflower property (with these assumptions we do not need to invoke Proposition \ref{p: equiv cipRP}, so strong amalgamation is not used, and we do not use the assumption of a relational language): \cite[Thm.\ 18]{AKM25} showed that any indivisible locally replicable ultrahomogeneous structure whose size is a regular cardinal has the infinite sunflower property.
\end{rem}

\begin{rem} \label{r: AKM}
    Ackerman informed us that he, Karker and Mirabi independently showed the following result (unpublished), by a generalisation of an argument for the Henson graph $\mb{H}_3$ sent to them by the authors of the present paper, combined with results of \cite{AKM25}: for a transitive relational structure $M$ of any infinite cardinality $\kappa$ with $\kappa^{< \kappa} = \kappa$ satisfying the condition that, for each $A \sub M$, $|A| < \kappa$, the orbit of any $v \in M \setminus A$ under the pointwise stabiliser of $A$ is of size $\kappa$, it is the case that the following equivalences hold: $M$ has the infinite sunflower property $\iff$ $M$ has the infinite $2$-sunflower property $\iff$ $M$ is indivisible and locally replicable.
\end{rem}

\subsection{Examples}

\begin{eg} \label{e: inf SP by galah}
    We apply Proposition \ref{p: equiv cipRP} and Theorem A to give examples of structures with and without the infinite sunflower property.
    \begin{itemize}
        \item From Example \ref{e: rg rt rorg}: the random graph, random tournament and random oriented graph all have the pigeonhole property and hence the galah property, and so all three structures have the infinite sunflower property.
        \item From Example \ref{e: implications strict}: the dense linear order $(\Q, <)$ has the galah property, and so has the infinite sunflower property.
        \item From Example \ref{e: implications strict}, the generic $K_n$-free graph $\mb{H}_n$ does not have the galah property, so does not have the infinite sunflower property. Likewise the free amalgam of $\omega$-many copies of $K_\omega$ does not have the infinite sunflower property.
    \end{itemize}
\end{eg}
\begin{eg} \label{e: inf SP by loc repl}
    We give two examples where we show the infinite sunflower property via local replicability, using Proposition \ref{p: equiv cipRP} and Theorem A.
    \begin{itemize}
        \item Let $M$ be the generic poset (the \Fr limit of the class of finite posets). We call a triple $(C, D, E)$ of finite subsets of $M$ \emph{valid} if:
        \begin{center}
            ($c < e$ for all $c \in C$, $e \in E$) $\,\wedge\,$
            ($\neg c > d$ for all $c \in C$, $d \in D$) $\,\wedge\,$
            ($\neg e < d$ for all $e \in E$, $d \in D$).
        \end{center}
        It is straightforward to check that the map $\xi$ from the set of external quantifier-free types over finite subsets of $M$ to the set of valid triples given by
        \[
        \xi(\qftp(b / A)) := (\{a \in A \mid a < b\}, \{a \in A \mid a \ic b\}, \{a \in A \mid a > b\})
        \]
        is a bijection. Let $U = \{v \in M \mid \qftp(v/A) = p\}$ be a basic open set of $M$, and let $(C, D, E) = \xi(p)$. To show that $U$ contains a copy of $M$, it suffices to show that for all $B \fin U$ and $v \in M \setminus B$, there is $u \in U$ with $\qftp(u/B) = \qftp(v/B)$. Let $B \fin U$, $v \in M \setminus B$, and let $(C', D', E') = \xi(\qftp(v / B))$. It is straightforward to check that $(C \cup C', D \cup D', E \cup E')$ is a valid triple, and so there is $u \in M$ with $\xi(\qftp(u / A \cup B)) = (C \cup C', D \cup D', E \cup E')$. We then have $u \in U$ and $\qftp(u / B) = \qftp(v / B)$, so $M$ is locally replicable, hence has the infinite sunflower property.
        \item Let $M$ be the generic ordered graph: namely, the \Fr limit of the class of finite linearly ordered graphs. It is straightforward to check that $M$ is locally replicable and hence has the infinite sunflower property.
    \end{itemize}
\end{eg}

\subsection{An example without strong amalgamation} \label{ss: meet-semilattice}
          
In this subsection, we give an example of an $\omega$-categorical relational \Fr structure without strong amalgamation which has the infinite sunflower property. By Lemma \ref{l: galah implies strong amalg} and Lemma \ref{l: galah iff loc repl}, as this structure does not have strong amalgamation, it does not have the galah property, and nor is it locally replicable.

\begin{defn} \label{d: canon str}
    Let $M$ be a \Fr structure with domain $D$. Let $\mc{L}_{\Can(M)}$ be a language given by taking an $n$-ary relation symbol $R_p$ for each $n \in \N_+$ and each quantifier-free type $p = \qftp^M(\bar{a})$ of a tuple $\bar{a} \in M^n$. Let $\Can(M)$ be the $\mc{L}_{\Can(M)}$-structure with domain $D$ where $R_p^{\Can(M)}$ consists exactly of the $n$-tuples with quantifier-free type $p$. We call $\Can(M)$ the \emph{canonical structure} of $M$ (see \cite[Sec.\ 3.1]{Mac11}). It is straightforward to check that $\Aut(M) = \Aut(\Can(M))$, that $\Can(M)$ is ultrahomogeneous, and that if $M$ is $\omega$-categorical then so is $\Can(M)$ (via the Ryll-Nardzewski theorem).
\end{defn}

\begin{lem} \label{l: inf SP for canon str}
    Let $\mc{L}$ be a first order language (not necessarily relational), and let $M$ be a \Fr $\mc{L}$-structure with the infinite sunflower property. Then $\Can(M)$ has the infinite sunflower property.
\end{lem}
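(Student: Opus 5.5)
The plan is to transfer sunflowers from $M$ to $\Can(M)$ through the bijection of domains, using that $\Can(M)$ and $M$ have the same domain $D$ and the same automorphism group, and that their substructures correspond. Fix $k \in \N_+$ and a structure on $k$-sets $N'$ with $N' \cong \Can(M)$, via an isomorphism $g : \Can(M) \to N'$. Transport the $\mc{L}$-structure of $M$ along $g$ to get an $\mc{L}$-structure $N$ on the same domain as $N'$, with $g : M \to N$ an isomorphism. Then $N$ is a structure on $k$-sets isomorphic to $M$. By the infinite $k$-sunflower property of $M$, there is a structured sunflower $S \sub N$ with $S \cong M$, where $S$ is an $\mc{L}$-substructure of $N$ and hence closed under the functions of $\mc{L}$.

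It remains to show that the corresponding subset $S' \sub N'$ (same underlying set as $S$, with the induced $\mc{L}_{\Can(M)}$-structure) is isomorphic to $\Can(M)$. The sunflower condition depends only on the underlying set, so $S'$ is then the required structured sunflower. Let $h : M \to S$ be an isomorphism, and set $X := g^{-1}(S) \sub D$. Then $g^{-1} \circ h : M \to M$ is an embedding of $\mc{L}$-structures with image $X$.

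The key step is the following. An embedding $e : M \to M$ of $\mc{L}$-structures preserves quantifier-free types of tuples: for each tuple $\bar a$ we have $\qftp^M(e(\bar a)) = \qftp^M(\bar a)$. This holds because a quantifier-free type of a tuple is determined by the atomic formulas, including those involving terms, that the tuple satisfies, and an $\mc{L}$-embedding preserves all of these. Consequently $e$ preserves every relation $R_p$ and its negation. So $e$ is an embedding $\Can(M) \to \Can(M)$, and its image, with the induced $\mc{L}_{\Can(M)}$-structure, is isomorphic to $\Can(M)$. Applying this to $e = g^{-1} \circ h$ shows that the induced $\mc{L}_{\Can(M)}$-substructure on $X$ is isomorphic to $\Can(M)$. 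Since $g$ is an $\mc{L}_{\Can(M)}$-isomorphism, it follows that $S' = g(X) \cong \Can(M)$.

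The only point needing care is the one in the previous paragraph: quantifier-free types in a non-relational language involve terms. Even so, $\mc{L}$-embeddings preserve them, and $\Can(M)$ is relational, so any subset of $D$ forms a substructure. I do not anticipate a genuine obstacle. Ultrahomogeneity is not even needed beyond the definition of $\Can(M)$.
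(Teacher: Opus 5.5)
Your proof is correct and follows the paper's argument: you transport the $\mc{L}$-structure along the given isomorphism, apply the sunflower property of $M$, and then check that the induced $\mc{L}_{\Can(M)}$-structure on the sunflower is a copy of $\Can(M)$. The only difference is in phrasing the last step. The paper verifies it via $N|_{\dom(S)} = \Can(M')|_{\dom(S)} = \Can(S) \cong \Can(M)$, whereas you observe that the $\mc{L}$-self-embedding $g^{-1} \circ h$ of $M$ preserves quantifier-free types and is therefore also a self-embedding of $\Can(M)$. This rests on the same fact and avoids having to define $\Can(S)$.
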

\begin{proof}
    Let $N$ be an $\mc{L}_{\Can(M)}$-structure on $k$-sets with an isomorphism $\tau : \Can(M) \to N$. Define an $\mc{L}$-structure $M'$ on $\dom(N)$ so that $\tau$ is an $\mc{L}$-structure isomorphism $M \to M'$. Then it is easy to check that $\Can(M') = N$. As $M$ has the infinite sunflower property, there is a sunflower $S \sub M'$, $S \cong M$. As $S$ is ultrahomogeneous, it too has a canonical structure $\Can(S)$, and as $S \cong M$ we have $\Can(S) \cong \Can(M)$. But $N|_{\dom(S)} = \Can(M')|_{\dom(S)} = \Can(S)$, so we are done.
\end{proof}

Let $\mc{K}$ be the class of finite meet-semilattices: that is, each element of $\mc{K}$ is a finite partial order where any two elements $a, b$ have a meet $a \wedge b$ (a greatest lower bound). We consider each element of $\mc{K}$ as a structure in the language $\mc{L} = \{<, \wedge\}$, where $<$ is a binary relation symbol and $\wedge$ is a binary function symbol. It is not difficult to verify that $\mc{K}$ has strong amalgamation (we leave this to the reader), and we let $\mb{S}$ denote the \Fr limit of $\mc{K}$. We call $\mb{S}$ the \emph{generic meet-semilattice}.

\begin{lem} \label{l: S point-accessible}
    The generic meet-semilattice $\mb{S}$ satisfies the following:
    \begin{enumerate}[label=(\roman*)]
        \item \label{i: S pt substr} for each $v \in \mb{S}$, we have that $\{v\}$ is the domain of a substructure of $\mb{S}$;
        \item \label{i: S exts pt-ac} for all $A \sub B \fin \mb{S}$ with $A$, $B$ substructures of $\mb{S}$, there is a chain $A = B_0 \sub \cdots \sub B_{n-1} = B$ of substructures of $\mb{S}$ with $|\dom(B_{i+1}) \setminus \dom(B_i)| = 1$ for all $i < n-1$.
    \end{enumerate}
\end{lem}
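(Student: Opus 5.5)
Part (i) is immediate from the axioms. In the language $\{<, \wedge\}$, the substructure generated by a single element $v$ is the closure of $\{v\}$ under $\wedge$, and $v \wedge v = v$ because meets are idempotent in any meet-semilattice. So $\{v\}$ is already closed, and it is the domain of a substructure of $\mb{S}$ (a one-element meet-semilattice).

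For part (ii), I will build the chain by adding elements of $\dom(B) \setminus \dom(A)$ one at a time, in an order chosen so that every intermediate set is closed under $\wedge$. The key choice is to add the elements in order of decreasing height. Concretely, since $B$ is a finite partial order, I list the elements of $\dom(B) \setminus \dom(A)$ as $b_1, \dots, b_m$ so that $b_i < b_j$ implies $i < j$. This is a linear extension of the order, so maximal elements come first; equivalently, $b_1$ is maximal among the new elements, and so on. I then set $B_i := A \cup \{b_1, \dots, b_i\}$.

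The main step is to show that each $B_i$ is closed under $\wedge$, which I do by induction on $i$. Suppose $B_{i-1}$ is closed, and take $x, y \in B_i$. I need $x \wedge y \in B_i$, where the meet is computed in $B$; meets in $B$ agree with meets in $\mb{S}$, since $B$ is a substructure.
\begin{itemize}
\item If $x, y \in B_{i-1}$, then $x \wedge y \in B_{i-1}$ by the inductive hypothesis.
\item If $x = y = b_i$, then $x \wedge y = b_i \in B_i$.
\item Otherwise we may take $x = b_i$ and $y \in B_{i-1}$. Put $z = b_i \wedge y \in B$. Either $z = b_i$, which lies in $B_i$, or $z < b_i$.
\item In the subcase $z < b_i$, suppose for contradiction that $z \notin A$. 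Then $z = b_j$ for some $j$, and the ordering forces $j < i$ (wait: I must check the direction).
\end{itemize}

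The direction is exactly the point to get right. Under my ordering, $b_j < b_i$ gives $j < i$, so $z = b_j$ lies in $B_{i-1} \sub B_i$ and we are done. Hence the linear extension must list \emph{smaller} elements first: that is, $b_j < b_i$ implies $j < i$, which is the condition I wrote. So I will add elements bottom-up, minimal ones first, not by decreasing height as I first said. With this bottom-up ordering the induction goes through: any new meet $z$ is either $b_i$ itself, an element of $A$, or a new element strictly below $b_i$ and hence already added.

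I do not expect a serious obstacle here. The only care needed is the orientation of the linear extension, and the remark that meets computed in the sets $B_i$ coincide with those of $\mb{S}$ because $B$ is a substructure. Finally, the step sizes are exactly one, as required: $B_0 = A$, $B_m = B$, and each step adds the single element $b_i$.
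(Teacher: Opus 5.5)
Your proof is correct and is essentially the paper's argument: the paper enumerates a finite substructure bottom-up (by repeatedly removing a maximal element) so that $c_i \wedge c_j$ always has index at most $\min\{i,j\}$, which is exactly your observation that $b_i \wedge y \le b_i$ forces any new meet to be present already when elements are added along a linear extension; you carry this out directly relative to $A$, whereas the paper leaves that reduction implicit. In a write-up, delete the initial ``decreasing height, maximal elements first'' gloss, since the condition you actually stated and used ($b_i < b_j \Rightarrow i < j$, minimal elements first) was the right one all along.
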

\begin{proof}
    \ref{i: S pt substr}: immediate. \ref{i: S exts pt-ac}: it suffices to check that for each finite substructure $C \fin \mb{S}$, one can enumerate $C$ as $c_0, \cdots, c_{r-1}$ such that for $i, j < r$ we have $c_i \wedge c_j = c_k$ for some $k \leq \min\{i, j\}$, and this follows easily by induction: removing a maximal element of $C$ gives a substructure, which we may enumerate by the induction assumption, and then we place the maximal element last in the enumeration.
\end{proof}

We may define local replicability exactly as in Definition \ref{d: top loc replic}: note that we were careful to define basic open sets in terms of quantifier-free types over finite \emph{substructures}.

\begin{lem} \label{l: S loc repl}
    The generic meet-semilattice $\mb{S}$ is locally replicable.
\end{lem}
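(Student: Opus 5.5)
The plan is to build a copy of $\mb{S}$ inside a basic open set $U = \{v \in \mb{S} \mid \qftp(v/A) = p\}$ by a forth construction. Here $A \fin \mb{S}$ is a finite substructure and $p = \qftp(b/A)$ for some $b \in \mb{S} \setminus A$.

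\textbf{Reduction.} A countable meet-semilattice $N$ is isomorphic to $\mb{S}$ as soon as $N$ is closed under $\wedge$ and satisfies the extension property for $\mc{K}$. By Lemma \ref{l: S point-accessible}\ref{i: S exts pt-ac} it suffices to check one-point extensions. That is, for every finite substructure $B \fin N$ and every $B \cup \{x\} \in \mc{K}$ in which $x \wedge \beta \in B$ for all $\beta \in B$, the extension should be realised in $N$ over $B$. Lemma \ref{l: S point-accessible}\ref{i: S pt substr} gives a starting point $\{b\} \sub U$. So I will build $N = \bigcup_n N_n$ with each $N_n \sub U$ a finite substructure, handling the countably many one-point extension tasks in a dovetailed enumeration.

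\textbf{The one-step claim.} Let $B \sub U$ be a finite substructure of $\mb{S}$ and $B \cup \{x\}$ a one-point extension as above. Then there is $u \in U$ with $\qftp(u/B) = \qftp(x/B)$.

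To prove this, let $D = \langle A \cup B \rangle$, a finite substructure of $\mb{S}$. I will construct abstractly a finite meet-semilattice $E \supseteq D \cup \{x\}$ with three properties:
\begin{itemize}
\item[(a)] $\langle B \cup \{x\} \rangle^E$ is the prescribed one-point extension;
\item[(b)] the substructure $\langle A \cup \{x\}\rangle^E$ is isomorphic to $\langle A \cup \{b\} \rangle$ over $A$;
\item[(c)] $D$ is a substructure of $E$.
\end{itemize}
By the extension property of $\mb{S}$ (applied over $D$, iterated along a chain from Lemma \ref{l: S point-accessible}\ref{i: S exts pt-ac}), $E$ embeds into $\mb{S}$ over $D$. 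The image $u$ of $x$ then lies in $U$ by (b) and has the right type over $B$ by (a).

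\textbf{Building $E$.} The natural choice is as follows. Since $B \cup \{x\}$ is a one-point extension, $x$ is not below anything new, and each $x \wedge \beta$ is already some $\beta' \in B$. Fix $\beta_0 \in B$. For $a \in A$, I want $x \wedge a$ to "copy" $b \wedge a$ as realised by elements of $B$: every $\beta \in B$ realises $p$, so the configuration $\langle A \cup \{\beta\}\rangle$ is a copy of $\langle A \cup \{b\}\rangle$.

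When $x$ lies above some element of $B$, I will try to set $x \wedge a := \beta \wedge a$ for a suitable $\beta \le x$ in $B$. When $x$ is incomparable to everything relevant, I will instead adjoin fresh elements $e_a$ playing the role of $b \wedge a$, mirroring the order that $b \wedge a$ has over $A$. After this I take the order generated and check that meets exist.

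\textbf{Main obstacle.} The hard part is the consistency in this last step: whether the prescribed meets $x \wedge a$ are compatible with the meets $x \wedge \beta$ forced by the extension and with the meets already present in $D$. Concretely, we need $(x \wedge \beta) \wedge a = x \wedge (\beta \wedge a)$ in $E$. I will try to verify this by exploiting that all $\beta \in B$ have the same type $p$ over $A$, which pins down $\beta \wedge a$ relative to $A$ uniformly.

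If the naive choice fails, the fallback is to strengthen the inductive invariant on $N_n$. For instance, I would require that for $\beta, \beta' \in N_n$ the element $(\beta \wedge \beta') \wedge a$ depends only on $a$ in a prescribed way. I would then verify that the one-step construction preserves this invariant.
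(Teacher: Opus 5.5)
\textbf{There is a genuine gap.} Your overall strategy matches what the paper intends: it leaves the proof to the reader and points to the poset argument, which realises one-point extensions inside $U$ by amalgamating with $\langle A\cup B\rangle$. However, the proposal stops exactly where the content lies, and it contains two concrete problems.

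\emph{First, the reduction is too restrictive.} You only admit one-point extensions with $x\wedge\beta\in B$ for all $\beta\in B$, i.e.\ with $x$ below no element of $B$. Extensions with $x<\beta$ (so $x\wedge\beta=x$) are unavoidable. If only your restricted steps are performed starting from $\{b\}$, every new point $y$ has $y\wedge b\in N_n$, hence $y\ge b$. So $b$ is the least element of $N$, and $N\not\cong\mb{S}$. The case $x<\min B$ needs an element of $U$ strictly below $\min B$, and your plan never addresses it.

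\emph{Second, the core step is only announced} (``I will try to verify''), and the recipe you suggest is wrong. Take $A=\{a\}$ and $p$ the type of an element incomparable to $a$, so $U$ is the set of elements incomparable to $a$. Take $B=\{g_1,g_2,g_1\wedge g_2\}\sub U$ with $g_1\wedge a$ and $g_2\wedge a$ incomparable, and let $x$ lie above $g_1$ and $g_2$. Any realisation $u$ has $u\wedge a\ge g_1\wedge a,\ g_2\wedge a$. So $u\wedge a$ is not $\beta\wedge a$ for any $\beta\le x$ in $B$: a fresh element is forced even though $x$ is comparable to elements of $B$. The consistency of such fresh elements with $D$ is exactly what is left unproved, and the fallback invariant is too vague to evaluate.

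\textbf{The missing idea is that basic open sets of $\mb{S}$ are order-convex.} For $v\notin A$, every term in $v$ over $A$ reduces to $a$, $v$ or $v\wedge a$ with $a\in A$. Hence $\qftp(v/A)$ is determined by three sets:
\begin{itemize}
\item $I_v=\{a\in A : a\le v\}$;
\item $J_v=\{a\in A : v\le a\}$;
\item $R_v=\{(a,a')\in A^2 : v\wedge a\le a'\}$.
\end{itemize}
If $v\le w$, then $I_v\sub I_w$, $J_w\sub J_v$ and $R_w\sub R_v$. So $\gamma\le v\le \ell$ with $\gamma,\ell\in U$ forces $v\in U$.

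With convexity, your one-step claim is immediate when $x$ lies between two elements of $B$. The other cases can be done by hand: $x<\min B$, and $x$ above some elements of $B$ but below none, where one represents $E$ by down-sets. It is simpler, though, to bypass the one-step claim altogether:
\begin{enumerate}
\item Adjoin to $\langle A b\rangle$ a point $b^-<b$ lying above every element strictly below $b$. This is a meet-semilattice with $b^-\wedge c=b\wedge c$ for $c\not\ge b$, so $b^-$ realises $p$. By the extension property, such a $b^-$ exists in $\mb{S}$.
\item Build the copy of $\mb{S}$ inside the open interval $(b^-,b)$. If $N_n\sub(b^-,b)$ is a finite substructure and $N_n\cup\{x\}$ is a one-point extension, place $x$ strictly between $b^-$ and $b$. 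This gives a meet-semilattice containing the substructure $N_n\cup\{b^-,b\}$, so the extension property realises $x$ inside $(b^-,b)$.
\item By convexity, the whole copy lies in $U$.
\end{enumerate}
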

We leave the proof of Lemma \ref{l: S loc repl} to the reader -- see the similar proof for the generic poset in Example \ref{e: inf SP by loc repl}.

\begin{lem} \label{l: S inf SP}
    The generic meet-semilattice $\mb{S}$ has the infinite sunflower property.
\end{lem}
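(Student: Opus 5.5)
We follow Remark \ref{r: lr indiv implies inf SP}: it suffices to show that $\mb{S}$ is locally replicable (Lemma \ref{l: S loc repl}) and indivisible, and then rerun the induction on $k$ from the proof of Theorem A, using Lemma \ref{l: S point-accessible} wherever the relational argument used single points. The galah property is not available, since $\mb{S}$ lacks it (it does not hold for this non-relational structure). So indivisibility must be proved by hand, and the induction must be checked to use only local replicability over finite \emph{substructures} together with indivisibility.

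\emph{Step 1 (indivisibility).} Let $(C, D)$ be a partition of $\mb{S}$. We build a copy of $\mb{S}$ inside $C$ or inside $D$ by a back-and-forth or extension construction. By Lemma \ref{l: S point-accessible}\ref{i: S exts pt-ac}, every extension of finite substructures $A \sub B$ decomposes into one-point extensions. So a subset $X \sub \mb{S}$ closed under $\wedge$ is isomorphic to $\mb{S}$ as soon as it satisfies the one-point extension property: for every finite substructure $A \sub X$ and every one-point extension type $p$ over $A$ realised in $\mb{S}$ (such a $p$ also determines the meets of the new point with $A$, which must lie in $A \cup \{x\}$), there is a realisation in $X$. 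We first try to show that $C$ itself has this property. If it fails, there are a finite substructure $A \sub C$ and a one-point type $p$ such that the open set $U = \{v : \qftp(v/A) = p\}$ lies in $D$. By Lemma \ref{l: S loc repl}, $U$ contains a copy of $\mb{S}$, which then lies in $D$. The point needing care is that $C$ need not be closed under $\wedge$. We therefore build the copy inside $C$ inductively, adding one point at a time and using the chain from Lemma \ref{l: S point-accessible}\ref{i: S exts pt-ac}. The new point is taken in $C$ and realises the required type over the previously built finite substructure, which lies in $C$. Because the type specifies the meets with earlier points, those meets are earlier points and hence lie in $C$. If this construction ever gets stuck, the failing open set lies in $D$ and we are done by local replicability.

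\emph{Step 2 (induction on $k$).} Let $N \cong \mb{S}$ be a structure on $k$-sets. Attempt the same one-point-at-a-time construction of a copy of $\mb{S}$ whose elements are pairwise disjoint sets. If it succeeds, we have a sunflower with empty core. If it fails, there are a finite substructure $A$ of pairwise disjoint sets and a one-point type $p$ over $A$ such that every $v$ realising $p$ meets $\bigcup A$. Local replicability gives a copy $N' \cong \mb{S}$ inside this open set. Colouring each $v \in N'$ by $v \cap \bigcup A$ uses finitely many colours, so Step 1, iterated over finitely many colours, gives a copy $N''$ on which all the intersections equal a fixed nonempty $B$. Removing $B$ from every element and applying the induction hypothesis finishes the proof, exactly as in the proof of Theorem A.

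The main obstacle is Step 1. Indivisibility must be proved without the galah property, and the closure of the partially built copy under $\wedge$ must be handled. The planned approach handles this by always extending via substructures along the one-point chains of Lemma \ref{l: S point-accessible}.
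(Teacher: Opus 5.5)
Your proposal is correct and takes essentially the same route as the paper. Indivisibility is proved by running the one-point extension property inside $C$ via Lemma \ref{l: S point-accessible}; a failure yields a nonempty basic open set contained in $D$, which contains a copy of $\mb{S}$ by local replicability, and the induction from Theorem A is then rerun using only local replicability and indivisibility (Remark \ref{r: lr indiv implies inf SP}), with your explicit point that the type of a one-point substructure extension fixes the meets with earlier points being the detail the paper compresses into its appeal to Lemma \ref{l: S point-accessible}.
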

\begin{proof}
    First note that $\mb{S}$ does not have the galah property: take $a, b \in \mb{S}$ with $a, b$ incomparable, and consider the partition $(\mb{S} \setminus \{a \wedge b\}, \{a \wedge b\}$. Nonetheless, we will show that $\mb{S}$ is indivisible, proceeding similarly to the proof of Lemma \ref{l: galah iff loc repl}. Let $(C, D)$ be a partition of $\mb{S}$. If $C$ does not contain a copy of $\mb{S}$, then there is a finite substructure $A$ of $S$ with $\dom(A) \sub C$ (potentially $A = \varnothing$) and an embedding $f : A \to B$ with $B \in \Age(S)$ such that there is no $B' \sub C$ with $\qftp(B'/A) = f^{-1} \cdot \qftp(B/f(A))$. Using Lemma \ref{l: S point-accessible}, we may assume $|B \setminus f(A)| = 1$, and then the rest of the argument is as before. We then follow the proof of \ref{i: cipRP thm A} $\implies$ \ref{i: inf SP thm A} in Theorem A to show that $\mb{S}$ has the infinite sunflower property, again using Lemma \ref{l: S point-accessible} to show that if the extension property fails, it fails for a one-point extension, and noting that we only use local replicability and indivisibility of $\mb{S}$ in this proof (see Remark \ref{r: lr indiv implies inf SP}).
\end{proof}

\begin{prop} \label{p: Can(S)}
    The structure $\Can(\mb{S})$ is an $\omega$-categorical relational \Fr structure with the infinite sunflower property but without strong amalgamation. 
\end{prop}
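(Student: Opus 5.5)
We verify four properties of $\Can(\mb{S})$: it is relational, ultrahomogeneous and $\omega$-categorical; it has the infinite sunflower property; and it lacks strong amalgamation. The language $\mc{L}_{\Can(\mb{S})}$ is relational by construction. Definition \ref{d: canon str} gives that $\Can(\mb{S})$ is ultrahomogeneous, and that it is $\omega$-categorical provided $\mb{S}$ is. So the first step is to check that $\mb{S}$ is $\omega$-categorical.

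For this I will show that finite subsets of $\mb{S}$ generate finite substructures, with uniformly bounded size. If $X$ has $n$ elements, the meet-closure of $X$ consists of meets of nonempty subsets of $X$, so it has at most $2^n - 1$ elements and is closed under $\wedge$. Hence $\mb{S}$ is uniformly locally finite. There are only finitely many isomorphism types of generated substructures of each bounded size, so each $n$-type is determined by finitely many options. By the Ryll-Nardzewski theorem, a uniformly locally finite ultrahomogeneous structure in a finite language is $\omega$-categorical.

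The sunflower property is immediate. Lemma \ref{l: S inf SP} gives that $\mb{S}$ has the infinite sunflower property. Lemma \ref{l: inf SP for canon str}, which allows non-relational $\mc{L}$, then transfers this to $\Can(\mb{S})$.

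The main point is the failure of strong amalgamation. Since $\Aut(\Can(\mb{S})) = \Aut(\mb{S})$ and both structures are ultrahomogeneous, orbits on tuples coincide. The key observation is that in $\Can(\mb{S})$ the relation $R_p$, with $p = \qftp^{\mb{S}}(a,b,a\wedge b)$, records the fact that $a \wedge b$ is determined by $a$ and $b$. Take incomparable $a, b \in \mb{S}$ and set $c = a \wedge b$. Let $A$ be the substructure of $\Can(\mb{S})$ on $\{a,b\}$, and let $B_0 = B_1$ be the substructure on $\{a,b,c\}$. Any amalgam lives in $\Age(\Can(\mb{S}))$, so it embeds into $\Can(\mb{S})$. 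There, both copies of $c$ satisfy $R_p$ together with $a, b$, so each equals $a \wedge b$ in the corresponding copy of $\mb{S}$, and the two copies must coincide. This argument needs a little care: I will work with embeddings into $\Can(\mb{S})$ and use that any realisation of $p$ is a triple $(a', b', a' \wedge b')$. Hence $\Can(\mb{S})$ has no strong amalgam over $A$, so it does not have strong amalgamation.

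Alternatively, this also follows from Lemma \ref{l: galah implies strong amalg} together with the partition exhibited in the proof of Lemma \ref{l: S inf SP}. However, that route only gives failure of the galah property, not of strong amalgamation, so I will use the direct argument.
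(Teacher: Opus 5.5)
Your proposal is correct and follows the paper's proof. Ultrahomogeneity and $\omega$-categoricity come from Definition \ref{d: canon str}, the sunflower property from Lemmas \ref{l: S inf SP} and \ref{l: inf SP for canon str}, and the failure of strong amalgamation from the fact that $a \wedge b$ is the unique $x$ with $R_p(a,b,x)$ for $p = \qftp(a,b,a\wedge b)$. You additionally make explicit two points the paper leaves implicit: the $\omega$-categoricity of $\mb{S}$ itself (via uniform local finiteness), and the concrete amalgamation problem over $\{a,b\}$ that has no strong amalgam.
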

\begin{proof}
    It was observed in Definition \ref{d: canon str} that $\Can(\mb{S})$ is $\omega$-categorical and ultrahomogeneous. The infinite sunflower property follows from Lemma \ref{l: inf SP for canon str} and Lemma \ref{l: S inf SP}. Take $a, b \in \mb{S}$ with $a, b$ incomparable, and let $p = \qftp(a, b, a \wedge b)$. Then there are only finitely many $c \in \Can(\mb{S})$ with $\Can(\mb{S}) \models R_p(a, b, c)$, and so as $\{a, b\}$ is the domain of a substructure of $\Can(\mb{S})$, we have that $\Can(\mb{S})$ does not have strong amalgamation.
\end{proof}

\subsection{Further observations regarding locally replicable \Fr structures} \label{ss: further obs locally repl}
          
\begin{lem}
    Let $M$ be an $\omega$-categorical relational \Fr structure. If each non-empty open set $U$ in $M$ satisfies $\Age(U) = \Age(M)$, then $M$ is locally replicable.
\end{lem}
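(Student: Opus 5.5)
The plan is a compactness argument: by $\omega$-categoricity there are only finitely many embeddings of each finite piece of $M$ into $U$ up to automorphisms fixing $A$, and König's lemma then gives a coherent sequence of embeddings whose union is a copy of $M$ in $U$. It suffices to treat a basic open set $U = \{v \in M \mid \qftp(v/A) = p\}$, since every non-empty open set contains one. Let $G_A \le \Aut(M)$ be the pointwise stabiliser of $A$. Any $g \in G_A$ preserves quantifier-free types over $A$, so $U$ is $G_A$-invariant.

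First I fix an enumeration $m_0, m_1, \ldots$ of $M$ and put $C_n = \{m_0, \ldots, m_{n-1}\}$. Since the language is relational, each $C_n$ is a substructure. Let $E_n$ be the set of embeddings $C_n \to M$ with image contained in $U$. The hypothesis $\Age(U) = \Age(M)$ gives $C_n \in \Age(U)$, so $E_n \neq \varnothing$ for every $n$. The group $G_A$ acts on $E_n$ by composition, and restriction to $C_n$ gives a $G_A$-equivariant map $E_{n+1} \to E_n$. This induces a map on orbit sets $E_{n+1}/G_A \to E_n/G_A$.

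Next I use $\omega$-categoricity. Since $\Aut(M)$ is oligomorphic and $A$ is finite, $G_A$ has only finitely many orbits on $M^n$ for each $n$. (Equivalently, $M$ expanded by constants for $A$ is still $\omega$-categorical.) Identifying an embedding $e \in E_n$ with the tuple $(e(m_0), \ldots, e(m_{n-1}))$, it follows that $E_n/G_A$ is finite. The orbit sets with the restriction maps therefore form a finitely branching tree with non-empty levels. König's lemma then yields orbits $O_n \in E_n/G_A$ with $O_{n+1}|_{C_n} = O_n$ for all $n$.

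Finally I lift this branch to an actual coherent chain of embeddings, by induction. Choose $e_0 \in O_0$. Given $e_n \in O_n$, pick any $e' \in O_{n+1}$. Its restriction $e'|_{C_n}$ lies in $O_n$, so there is $g \in G_A$ with $g \circ e'|_{C_n} = e_n$. Set $e_{n+1} = g \circ e'$. This still lies in $O_{n+1} \subseteq E_{n+1}$ because $U$ is $G_A$-invariant, and it extends $e_n$. The union $\bigcup_n e_n$ is then an embedding $M \to M$ with image inside $U$, so $U$ contains a copy of $M$. Hence $M$ is locally replicable.

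The only step needing care is the finiteness of $E_n/G_A$, i.e.\ that adding the finitely many parameters $A$ preserves oligomorphicity; this is the standard Ryll-Nardzewski fact, so I expect no real obstacle. The remaining steps are routine bookkeeping.
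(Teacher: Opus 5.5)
Your proof is correct and follows essentially the same route as the paper's. The paper builds the finitely branching tree whose level-$i$ nodes are the quantifier-free types over $A$ of tuples in $U$ isomorphic to $B_i$, for an increasing chain $B_0 \subseteq B_1 \subseteq \cdots$ exhausting $M$; it gets finite levels from Ryll-Nardzewski, takes a branch by K\H{o}nig's lemma, and realises the branch by repeatedly applying the extension property. Your $G_A$-orbits on $E_n$ are exactly these types (by ultrahomogeneity), and your explicit lifting via elements of $G_A$ is just a spelled-out version of that final step.
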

\begin{proof}
    Let $U = \{v \in M \mid \qftp(v/A) = p\}$ be a non-empty open set in $M$. Let $B_0 \sub B_1 \sub \cdots$ be an increasing chain of finite substructures of $M$ such that $\bigcup_{i < \omega} B_i = M$, $B_0 = \varnothing$ and $|B_{i+1} \setminus B_i| = 1$ for all $i < \omega$. Let $T$ be a rooted directed tree where the set of vertices of level $i$ is $\{\qftp(\bar{v}/A) \mid \bar{v} \in U^i, \bar{v} \cong B_i\}$, which is a finite set (by $\omega$-categoricity and the Ryll-Nardzewski theorem), and where there is an out-edge from $p$ in level $i$ to $p'$ in level $i + 1$ if $p \sub p'$. By K\H{o}nig's lemma, the tree $T$ contains an infinite branch, and so by repeatedly applying the extension property we have that $U$ contains a copy of $M$.
\end{proof}

We now investigate \Fr structures $M$ satisfying the property that each basic open set is \emph{isomorphic} to $M$ (this is called being \emph{quantifier-free definably self-similar} in \cite{GP23}, and is referred to in \cite{AKM25} as $M$ having \emph{strong universal duplication of quantifier-free types}):

\begin{lem}
    Let $M$ be a relational \Fr structure in a binary language. Suppose that $M$ is locally replicable. Then each basic open set of $M$ is isomorphic to $M$.
\end{lem}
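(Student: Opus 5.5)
Fix a basic open set $U = \{v \in M \mid \qftp(v/A) = p\}$ with $A \fin M$, and assume $U$ is non-empty (otherwise there is nothing to prove). The plan is to show that $U$, as an induced substructure, satisfies the extension property for $\Age(M)$. Since $M$ is a countable relational \Fr structure, $U$ is countable, and a countable structure with age contained in $\Age(M)$ and the extension property is isomorphic to $M$, so this suffices. Concretely, fix $B \fin U$ and a one-point extension type: some $w \in M \setminus B$. We must find $u \in U$ with $\qftp(u/B) = \qftp(w/B)$. The empty case is covered because $U$ is non-empty and, by Lemma \ref{l: galah iff loc repl} and Lemma \ref{l: galah implies strong amalg}, $M$ is transitive.

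The key point is that in a binary language, the quantifier-free type of a point $u$ over a finite set $X$ is determined by its type over the empty set together with the types $\qftp(u, x)$ for $x \in X$. Hence a candidate type over $A \cup B$ that agrees with $p$ on $A$ and with $\qftp(w/B)$ on $B$ is a well-defined quantifier-free type, given the following two checks. First, the one-point types over $\varnothing$ of $p$ and of $w$ must agree; this holds by transitivity. Second, we need $A \cap B = \varnothing$, or agreement on any overlap. But $B \sub U$ and points of $A$ do not realise $p$ (since $p$ is non-algebraic, containing $x \neq a$), so $A \cap B = \varnothing$. Combined with the binary-language point, this shows the combined type $q$ is consistent. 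The main obstacle is then to show that $q$ is realised in $M$, i.e.\ that the structure $A \cup B \cup \{u\}$ it describes lies in $\Age(M)$. Note that binarity does not by itself give free amalgamation, so the forbidden configurations are the issue.

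For this step I would use local replicability rather than any amalgamation property directly. Since $B \cup \{w\}$ might not embed into $U$ in the right way over $A$, I would first apply local replicability to $U$ to get a copy $M' \sub U$ of $M$. Unfortunately $B$ need not lie in $M'$, so instead I would argue as follows. Consider the open set $U' = \{v \in M \mid \qftp(v/A \cup B) \supseteq p \cup \qftp(w/B)\}$ if realised; the issue is exactly realisability. So instead I would use the copy $M' \sub U$ together with ultrahomogeneity. In $M'$, pick a copy $B'$ of $B \cup \{w\}$ via the extension property of $M'$. Then $B' \cup A$ is a configuration in which every point of $B'$ (including the image of $w$) realises $p$ over $A$. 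Since relations are binary, $\qftp(A \cup B')$ is determined by the pairwise data, and this matches $q$ restricted appropriately. Thus the structure $A \cup B \cup \{u\}$ described by $q$ is isomorphic over $A$ to $A \cup B'$, so $q$ is realised; ultrahomogeneity then transfers $B'$ back to $B$. More precisely, the isomorphism $A \cup B'_{\restriction B} \to A \cup B$, which fixes $A$ pointwise, extends to an automorphism, and the image of the point corresponding to $w$ is the desired $u \in U$. The isomorphism is valid because both sides have pairwise types to $A$ equal to those prescribed by $p$, and internal structure matching $B$.

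The binarity check in this last isomorphism is where the hypothesis is genuinely needed. In higher arity, a relation involving two points of $B$ and one of $A$ would not be controlled by $p$ alone, and this is the step I would verify most carefully.
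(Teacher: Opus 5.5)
Your argument is correct and uses the same mechanism as the paper. By binarity, $f \cup \id_A$ is an isomorphism whenever $f$ is an isomorphism between finite subsets of $U$; ultrahomogeneity extends it to an automorphism fixing $A$ pointwise (hence preserving $U$); and local replicability ensures $U$ contains copies of everything in $\Age(M)$. The only difference is packaging: the paper concludes that $U$ is ultrahomogeneous with $\Age(U) = \Age(M)$ and applies Fra\"{i}ss\'{e}'s uniqueness theorem, whereas you verify the extension property of $U$ directly. Your preliminary discussion of the consistency of $q$ and of transitivity becomes unnecessary once you use the copy $M' \sub U$.
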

\begin{proof}
    Let $U = \{v \in M \mid \qftp(v/A) = p\}$ be a basic open set. As $U$ contains a copy of $M$ we have $\Age(U) = \Age(M)$. Let $B, C \fin U$ and let $f : B \to C$ be an isomorphism. As the language of $M$ is binary we have that $f \cup \id_A$ is also an isomorphism, and as $M$ is ultrahomogeneous $f \cup \id_A$ extends to some $g \in \Aut(M)$. As $g(U) = U$ we have that $U$ is ultrahomogeneous, and so $U \cong M$. 
\end{proof}

We give an example showing that the above lemma does not hold in general for higher-arity languages:

\begin{eg}
    We write $R$ for the $3$-hypergraph relation. Let $F = \{a, b, c, d, e\}$ be a $3$-hypergraph with edges as follows: $\{a, b, c, d\}$ is a complete $3$-hypergraph, and $F$ additionally has the edges $abe, cde$. Let $\mc{K}$ be the class of finite $3$-hypergraphs without $F$ as a (possibly non-induced) subgraph. As each pair of vertices of $F$ lie in some edge, the class $\mc{K}$ has free amalgamation. Let $M$ be the \Fr limit of $\mc{K}$. Let $a, b \in M$ be distinct, and take the basic open set $U_0 = \{v \in M \mid Rvab\}$. By the extension property of $M$, there exist $c, d \in U_0$ such that $\{a, b, c, d\}$ is complete, and so as $M$ omits $F$, for all $v \in U_0$ we have $\neg R cdv$. So $U_0 \not\cong M$.

    Now let $U = \{v \in M \mid \qftp(v/A) = p\}$ be a basic open set of $M$. We will show that $U$ contains a copy of $M$. We observe: for all $B \fin U$ such that no edge of $A \cup B$ contains exactly one vertex of $A$, and for all quantifier-free types $q = q(x / B)$ with a realisation in $M \setminus B$, there is a realisation $c \in U$ of $q$ such that no edge of $A \cup B \cup \{c\}$ contains exactly one vertex of $A$ (it is a straightforward check that one does not create a copy of $F$). Thus, writing $M$ as the union of an increasing chain of finite substructures and using the extension property, we find a copy of $M$ in $U$ as required.
\end{eg}

It was shown in \cite{GPS23} that, if $M$ is a transitive \Fr structure, then a well-known higher amalgamation property, the \emph{$3$-disjoint amalgamation property} (over $\varnothing$), implies that each basic open set of $M$ is isomorphic to $M$:

\begin{notn}
    As usual, for $n \in \N$ we assume $n = \{0, \cdots, n-1\}$. We write $n^{(k)}$ for the set of subsets of $n$ of size $k$.
\end{notn}

\begin{defn}[{\cite[Def.\ 3.1]{Kru19}}] \label{d: 3-DAP}
    Let $\mc{L}$ be a relational language. For each $I \subsetneq J \subsetneq 3$, let $f_{I, J} : A_I \to A_J$ be an embedding of finite $\mc{L}$-structures, and suppose that for distinct $I, I' \in 3^{(1)}$ we have $f_{I, I \cup I'} \circ f_{\varnothing, I} = f_{I', I \cup I'} \circ f_{\varnothing, I'}$ and $\im(f_{I, I \cup I'}) \cap \im(f_{I', I \cup I'}) = \im(f_{I, I \cup I'} \circ f_{\varnothing, I})$. We call the family of embeddings $(f_{I, J})_{I \subsetneq J \subsetneq 3}$ a \emph{$3$-disjoint family}.
    
    Let $(f_{I, J})_{I \subsetneq J \subsetneq 3}$ be a $3$-disjoint family. For each $J \in 3^{(2)}$ let $g_J : A_J \to B$ be an embedding of finite $\mc{L}$-structures, and suppose that $g_J \circ f_{J \cap J', J} = g_{J'} \circ f_{J \cap J', J'}$ for all distinct $J, J' \in 3^{(2)}$. We call $(g_J)_{J \in 3^{(2)}}$ a \emph{$3$-disjoint amalgam} of $(f_{I, J})_{I \subsetneq J \subsetneq 3}$.

    Let $\mc{K}$ be a relational \Fr class. If each $3$-disjoint family of embeddings in $\mc{K}$ has a $3$-disjoint amalgam in $\mc{K}$, we say that $\mc{K}$ has the \emph{$3$-disjoint amalgamation property ($3$-DAP)}. If each $3$-disjoint family $(f_{I, J})$ in $\mc{K}$ with $\dom(f_{\varnothing, I}) = \varnothing$ for all $I \in 3^{(1)}$ has a $3$-disjoint amalgam in $\mc{K}$, we say that $\mc{K}$ has the \emph{$3$-DAP over $\varnothing$}.
\end{defn}

The following is \cite[Proposition 2.23]{GPS23} (the proof is relatively straightforward, and we omit it):
\begin{lem} \label{l: 3-DAP implies loc repl}
    Let $M$ be a transitive relational \Fr structure whose age has the $3$-DAP over $\varnothing$. Then each basic open set of $M$ is isomorphic to $M$.
\end{lem}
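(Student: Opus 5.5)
To prove that a basic open set $U = \{v \in M \mid \qftp(v/A) = p\}$ is isomorphic to $M$, I would argue as follows. I check directly that $U$, as an induced substructure, is nonempty and satisfies the extension property for $\Age(M)$. Since $M$ is countable and ultrahomogeneous, $U$ is countable, and a countable structure with the extension property for $\Age(M)$ (together with $\Age(U) \sub \Age(M)$) is isomorphic to $M$ by the standard back-and-forth argument. So it suffices to show the following: for every finite $B \fin U$ and every one-point extension $B \cup \{x\}$ in $\Age(M)$ with $x \notin B$, there is $u \in U \setminus B$ realising $\qftp(x/B)$. This also covers $B = \varnothing$, which gives nonemptiness and that every vertex type is realised; transitivity ensures there is only one vertex type, and $U$ is nonempty because $p$ is realised.

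The key step uses $3$-DAP over $\varnothing$. Take the three "vertices" of the amalgamation diagram to be $A$, $B$ and a single new point $\{x\}$, with empty bottom level. The three two-element faces are as follows.
\begin{enumerate}
\item $A \cup B$ is the actual substructure of $M$. Note $A \cap B = \varnothing$ since $B \sub U$ and $p$ is a non-algebraic type over $A$ realised outside $A$.
\item $B \cup \{x\}$ is the given extension.
\item $A \cup \{x\}$ is a copy of $A \cup \{b\}$ for any realisation $b$ of $p$, so that $x$ realises $p$ over $A$.
\end{enumerate}
Compatibility on the one-point level needs each $A_I$ to be a common substructure of its two faces. Here we need that the restriction of face (2) to $\{x\}$ and of face (3) to $\{x\}$ agree as one-point structures. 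This is exactly where transitivity is used: every single vertex has the same quantifier-free type, so both faces restrict to the same one-point structure on $x$. The restrictions to $A$ and $B$ match trivially. The disjointness conditions hold since the bottom is empty and the pieces are disjoint. The $3$-DAP over $\varnothing$ then yields $D \in \Age(M)$ containing $A \cup B \cup \{x\}$ with all three faces correct.

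Finally, by ultrahomogeneity (the extension property of $M$ over the finite substructure $A \cup B$), realise $D$ inside $M$ over the identity on $A \cup B$. This gives $u \in M \setminus (A \cup B)$ with $\qftp(u/A) = p$ and $\qftp(u/B) = \qftp(x/B)$. Hence $u \in U \setminus B$ is the required witness.

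The main obstacle is bookkeeping in the $3$-disjoint family. I must make sure the embeddings $f_{I,J}$ are set up so that the compatibility conditions hold, that is, the one-point piece $\{x\}$ embeds consistently into both faces. Transitivity is precisely the hypothesis that makes this work. I must also make sure the amalgam is disjoint, so that $u \notin A \cup B$. Everything else is routine.
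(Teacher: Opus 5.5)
Your proposal is correct and is the standard argument behind this lemma; the paper omits the proof and defers to the cited source. You verify the extension property of $U$ by a $3$-DAP over $\varnothing$ on the pieces $A$, $B$, $\{x\}$, using transitivity so that the one-point structures on $x$ in the faces $B\cup\{x\}$ and $A\cup\{x\}$ agree.

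Two small remarks. First, $A \cap U = \varnothing$ simply because $p = \qftp(b/A)$ with $b \notin A$ contains $x \neq a$ for each $a \in A$; non-algebraicity is not the reason. Second, the disjointness of the amalgam that you list as an obstacle is automatic here. Each pair among $A$, $B$, $\{x\}$ lies in a common two-element face, and the $g_J$ are injective, so the image of $x$ avoids the images of $A$ and $B$.
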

\begin{rem}
    In \cite{GPS23}, this result is stated with the assumption that $\Age(M)$ has the $3$-DAP, but in fact only the $3$-DAP over $\varnothing$ is used in the proof. The above result also appears in \cite[Lemma 16]{AKM25} with the additional assumption of $\omega$-saturation (and the erroneous omission of transitivity).
\end{rem}

Thus via Lemma \ref{l: galah iff loc repl}, Lemma \ref{l: galah implies strong amalg}, Proposition \ref{p: equiv cipRP} and Theorem A we have:
\begin{prop} \label{p: 3-DAP implies galah}
    Let $M$ be a transitive relational \Fr structure with the $3$-DAP over $\varnothing$. Then $M$ has the galah property and the infinite sunflower property.
\end{prop}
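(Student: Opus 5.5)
The plan is to chain the results already established, with Lemma \ref{l: 3-DAP implies loc repl} as the only substantive input.

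First, let $M$ be transitive, relational and \Fr, with $\Age(M)$ having the $3$-DAP over $\varnothing$. By Lemma \ref{l: 3-DAP implies loc repl}, every basic open set $U = \{v \in M \mid \qftp(v/A) = p\}$ is isomorphic to $M$. A basic open set here is realised, hence non-empty, since $p$ is the type of some $b \in M \setminus A$. Every non-empty open set contains a basic open set, so every non-empty open set contains a copy of $M$. Thus $M$ is locally replicable in the sense of Definition \ref{d: top loc replic}.

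Next, Lemma \ref{l: galah iff loc repl} applies to any relational \Fr structure, so $M$ has the galah property. Lemma \ref{l: galah implies strong amalg}\ref{galah str amalg} then shows that $M$ has strong amalgamation. This step matters because Theorem A and Proposition \ref{p: equiv cipRP} assume strong amalgamation, and the hypotheses of the present statement do not provide it directly.

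Now $M$ is a relational \Fr structure with strong amalgamation and the galah property. Proposition \ref{p: equiv cipRP} gives the canonical infinite point-Ramsey property, and Theorem A (iii) $\Rightarrow$ (i) then gives the infinite sunflower property. Alternatively, Remark \ref{r: lr indiv implies inf SP} applies directly, since $M$ is locally replicable and, by the galah property, indivisible.

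All the real work sits inside Lemma \ref{l: 3-DAP implies loc repl}, which we are allowed to assume. The one point needing care is the order of deductions: we must obtain strong amalgamation from the galah property before invoking Theorem A, rather than assuming it.
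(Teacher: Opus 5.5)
Your proposal is correct and is essentially the paper's own argument. Lemma \ref{l: 3-DAP implies loc repl} gives local replicability, Lemma \ref{l: galah iff loc repl} then gives the galah property, and Lemma \ref{l: galah implies strong amalg} supplies the strong amalgamation needed to invoke Proposition \ref{p: equiv cipRP} and Theorem A; your point that strong amalgamation must be derived rather than assumed is exactly why the paper cites that lemma in this chain.
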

\begin{eg}
    Let $r \in \N$ with $r \geq 2$. For $n \in \N_+$, we let $K^{(r)}_n$ denote the complete $r$-uniform hypergraph on $n$ vertices. It is straightforward to see that for $n > r \geq 3$, the generic $K^{(r)}_n$-free hypergraph (the \Fr limit of the class of finite $K^{(r)}_n$-free hypergraphs) has the $3$-DAP, and hence by Proposition \ref{p: 3-DAP implies galah} has the infinite sunflower property.   
\end{eg}
              
\section{The finite sunflower property} \label{s: finite SP}

We first define a certain canonical Ramsey property, the \emph{very canonical point-Ramsey property}, and show that it implies the finite sunflower property in the case of \Fr classes (Theorem B). We then show that free amalgamation classes with a single vertex isomorphism-type have the very canonical point-Ramsey property (Proposition \ref{p: vertex-tr free amalg have finite SP}), as do many classes of metric spaces (Proposition \ref{p: vcpRP for Sms}), and so these classes have the finite sunflower property.

The key idea in showing the finite sunflower property via a canonical Ramsey property is as follows. Given $B$ and a structure on $k$-sets $C'$ in which we wish to find a copy of $B$ as a sunflower, one would like to define vertex-colourings of $C'$ that enable us to compare elements of each $k$-set vertex, so that we can use a canonical Ramsey property to control the intersections of these $k$-sets. For example, one can colour each $k$-set vertex in $C'$ with its least element (giving potentially $\omega$-many colours), and then a monochromatic set has an element in its common intersection, which one would then like to remove so as to proceed via induction, as in the classical proof of the Erd\H{o}s-Rado sunflower lemma. However, if one can only find a heterochromatic set, there is not enough control over the potential intersections of the $k$-set vertices. To fix this, one would like to define colourings which are able to compare, e.g.\ the first element of one $k$-set and the third element of another. To be able to define such colourings in a consistent way, we introduce a partition of the vertex set of $C'$, so that we may define colourings like ``if the vertex is in the first set of the partition, look at its first element, and if the vertex is in the seventh set of the partition, look at its third element". This gives us enough information to find sunflowers. See the proof of Theorem B.

\begin{defn}
    Let $n \in \N$ with $n \geq 1$. Let $X$ be a set with partition $X = X_0 \sqcup \cdots \sqcup X_{n-1}$. We call an $n$-set $U \sub X$ a \emph{transversal} if $|U \cap X_i| = 1$ for all $i < n$.
\end{defn}

\begin{defn} \label{d: vcpRP}
    Let $\mc{K}$ be a \Fr class. We say that $\mc{K}$ has the \emph{very canonical point-Ramsey property} if for all $s \in \N_+$, for all $B \in \mc{K}$, there exists $C \in \mc{K}$ and a partition $V_0 \sqcup \cdots \sqcup V_{|B| - 1}$ of the domain of $C$ such that for any $s$ vertex-colourings $(\chi_r : C \to \omega \mid r < s)$, at least one of the following holds:
    \begin{itemize}
        \item there exist $r < s$ and $B' \sub C$, $B' \cong B$, such that $\chi_r$ is monochromatic on $B'$;
        \item there exists transversal $B' \sub C$, $B' \cong B$, such that for all $r < s$, the colouring $\chi_r$ is heterochromatic on $B'$.
    \end{itemize}
    We call such $C$ a \emph{witness} of the very canonical point-Ramsey property for $(s, B)$.
\end{defn}

\begin{manualthm}{B}
    Let $\mc{K}$ be a \Fr class. Then the following hold:
    \begin{enumerate}[label=(\roman*)]
        \item \label{i: finite 2SP implies cpR} if $\mc{K}$ has the finite $2$-sunflower property, then it has the canonical point-Ramsey property;
        \item \label{i: vcpRP implies finite SP} if $\mc{K}$ has the \emph{very} canonical point-Ramsey property, then it has the finite sunflower property.
    \end{enumerate}
\end{manualthm}
\begin{proof}
    \ref{i: finite 2SP implies cpR}: the proof is entirely analogous to the proof of Lemma \ref{l: inf 2SP implies cipRP}: given $B \in \mc{K}$, one takes a witness $C$ for $B$ of the finite $2$-sunflower property, and the rest of the argument is the same (replacing $M$ with $C$).
    
    \ref{i: vcpRP implies finite SP}: We show that $\mc{K}$ has the finite $k$-sunflower property for all $k \in \N_+$ via induction on $k$. The case $k = 1$ is immediate. Suppose $k > 1$. Let $B \in \mc{K}$. By the induction assumption, there is a witness $D \in \mc{K}$ for $B$ of the finite $(k-1)$-sunflower property. Let $s = k^{|D|}$, and let $C \in \mc{K}$ with $\dom(C) = V_{0 \vphantom{|D| - 1}} \sqcup \cdots \sqcup V_{|D| - 1}$ be a witness for $D$ of the very canonical point-Ramsey property for $s$-many colourings. Let $C'$ be a structure on $k$-sets with $C' \cong C$, and let $V'_{0 \vphantom{|D| - 1}} \sqcup \cdots V'_{|D| - 1}$ be the partition of $\dom(C')$ induced by the isomorphism $C \to C'$. We may assume that the $k$-sets forming the domain of $C'$ are $k$-sets of natural numbers. Each $k$-set $v \in C'$ is linearly ordered by the standard order $<$ on $\N$, and thus may be enumerated as $v_0, \cdots, v_{k-1}$ with $v_0 < \cdots < v_{k-1}$.

    For each function $f : |D| \to k$, we define a colouring $\chi_f : C' \to \omega$ as follows: for each $i < |D|$ and for each $v \in V'_i$, we define $\chi_f(v) = v_{f(i)}$. By how we defined $C$, one of the following holds:
    \begin{enumerate}[label=(\roman*)]
        \item \label{some f with mono piece copy} there is some $f \in k^{|D|}$ and $D' \sub C'$, $D' \cong D$, such that $\chi_f$ is monochromatic on $D'$;
        \item \label{transversal copy hetero for all f} there is $D' \sub C'$, $D' \cong D$, with $|D' \cap V'_i| = 1$ for all $i < |D|$, such that for all $f \in k^{|D|}$, the colouring $\chi_f$ is heterochromatic on $D'$.
    \end{enumerate}
    We consider each case separately.
    
    \ref{some f with mono piece copy}: As $\chi_f$ is monochromatic on $D'$, there is $\lambda \in \N$ such that $\lambda \in v$ for all $v \in D'$. Let $D''$ be the structure on $(k-1)$-sets with domain $\{v \setminus \{\lambda\} \mid v \in D'\}$ defined by specifying that $v \mapsto v \setminus \{\lambda\}$ is an isomorphism $D' \to D''$. By the definition of $D$, we have that $D''$ contains a structured sunflower $B'' \cong B$, and so $D' \sub C'$ contains a structured sunflower $B' \cong B$.

    \ref{transversal copy hetero for all f}: Take distinct $u, v \in D'$. We have $u \in V'_i$, $v \in V'_j$ for some distinct $i, j$. Suppose there is $\lambda \in \N$ with $\lambda \in u \cap v$. Then there is $f \in k^{|D|}$ with $u_{f(i)} = v_{f(j)} = \lambda$, contradicting that $D'$ is heterochromatic in $\chi_f$. So the elements of $D'$ are pairwise disjoint $k$-sets, and as $D'$ contains a copy of $B$ we are done. \qedhere    
\end{proof}

\subsection{Transitive free amalgamation classes}
          
We now show that free amalgamtion classes with a single vertex isomorphism-type have the very canonical point-Ramsey property. The below Propsition \ref{p: very canonical hypergraph witness} is a generalisation of a canonical Ramsey theorem for hypergraphs due to \Jarik and R\"{o}dl (\cite[Theorem 2.1]{NR78}). We also adapt their proof strategy, which uses the probabilistic method and is inspired by the well-known proof of Erd\H{o}s and Hajnal that there exist finite hypergraphs of arbitrarily large girth and chromatic number (\cite{EH66}). In \cite{ENR84}, Erd\H{o}s, \Jarik and R\"{o}dl also reprove \cite[Theorem 2.1]{NR78} via an explicit construction (a form of the partite construction).

\begin{prop} \label{p: very canonical hypergraph witness}
    Let $n, s, g \in \N$ with $n, g \geq 2$ and $s \geq 1$. Then there exists a finite $n$-uniform hypergraph $H$ with girth $\geq g$ and with a partition $V_0 \sqcup \cdots \sqcup V_{n-1}$ of its vertex set, such that for any $s$ vertex-colourings $(\chi_r : H \to \omega \mid r < s)$, at least one of the following holds:
    \begin{itemize}
        \item there exists $r < s$ and $i < n$ such that $\chi_r$ is monochromatic on some edge contained in $V_i$;
        \item there exists a transversal edge $e$ such that $e$ is heterochromatic in each $\chi_r$, $r < s$.
    \end{itemize}
\end{prop}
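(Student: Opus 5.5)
We will use the probabilistic method of Erd\H{o}s--Hajnal, as adapted by \Jarik and R\"{o}dl for \cite[Theorem 2.1]{NR78}. Fix large $m$. Take $V_0, \dots, V_{n-1}$ to be disjoint sets of size $m$. Choose two kinds of random edges independently:
\begin{itemize}
    \item \emph{inner} edges: $n$-subsets of a single $V_i$, each included with probability $p = m^{\theta - n + 1}$;
    \item \emph{transversal} edges: transversals of the partition, each included with probability $q = m^{\theta' - n + 1}$.
\end{itemize}
Here $\theta, \theta' > 0$ are small constants, chosen so that the expected number of cycles of length $< g$ is $o(m)$. A cycle of length $\ell$ uses $\ell$ edges and at most $\ell(n-1)$ vertices, so its expected count is $O(m^{\ell(n-1)} \max(p,q)^\ell) = O(m^{\ell\max(\theta,\theta')})$; this is $o(m)$ once $\max(\theta,\theta') < 1/g$. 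We then delete one vertex from each short cycle (including $2$-cycles, i.e.\ pairs of edges sharing $\geq 2$ vertices), which destroys only $o(m)$ vertices. The aim is to show that, with positive probability, the remaining hypergraph still has the stated property.

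The first reduction is to the case where no $\chi_r$ has a monochromatic inner edge. Then in each $V_i$ and for each $r$, every colour class of $\chi_r$ is an independent set of the inner hypergraph on $V_i$. We want a transversal edge that is heterochromatic in every $\chi_r$. It suffices to find, for each $i$, a large set $W_i \sub V_i$ on which each $\chi_r$ has "spread-out" colour classes, and then show that some transversal edge avoids all colour coincidences. A coincidence can occur only between vertices in distinct parts, since within one part a transversal edge has a single vertex.

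The key counting step works as follows. Call a transversal bad if, for some $r$ and some $i \neq j$, its $V_i$-vertex and $V_j$-vertex receive the same $\chi_r$-colour. Because colour classes inside each $V_i$ are independent in a random hypergraph, with high probability every independent set of the inner hypergraph on $V_i$ has size at most $t = m^{1-\delta}$ for some $\delta > 0$ depending on $\theta$. This is the standard first-moment bound: $\binom{m}{t}(1-p)^{\binom{t}{n}} \to 0$. Consequently each vertex $x \in V_i$ shares its $\chi_r$-colour with at most $t$ vertices of any fixed $V_j$, and the number of bad transversals is at most $s\binom{n}{2} m \cdot t \cdot m^{n-2} = O(m^{n-\delta})$. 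The number of all transversals on the surviving vertices is $\Theta(m^n)$. So at least $cm^n$ transversals are good, for any colourings.

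The main obstacle is that the colourings are chosen adversarially after $H$ is fixed, so we need that every set of $cm^n$ transversals contains a transversal edge. This cannot hold uniformly over all such sets, since there are too many of them. Following Nešetřil--Rödl, I would instead restrict the union bound to the structured family actually produced. The good set is determined by the partitions of each $V_i$ into colour classes of size $\leq t$, one partition per $r$. I would encode these partitions coarsely, by grouping classes into $m/t$ blocks, so that the number of relevant configurations is $\exp(O(m^{1+\epsilon}))$. The probability that a fixed configuration's $\geq cm^n$ good transversals contain no transversal edge is $(1-q)^{cm^n} = \exp(-cm^{1+\theta'})$. Choosing $\theta' > \epsilon$, within the girth constraint, makes the union bound succeed. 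Finally, the deleted $o(m)$ vertices remove only $o(m^n)$ transversals, so the good count survives the deletion. I expect the most delicate part to be tuning this encoding and the parameters $\theta, \theta', \delta$ consistently.
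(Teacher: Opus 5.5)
Your plan follows the paper's outline: a random $n$-uniform hypergraph on $V_0\sqcup\cdots\sqcup V_{n-1}$ with edge probability about $m^{1-n+\theta}$ and $\theta<1/g$, deletion to kill the $o(m)$ short cycles, and a union bound over colourings. The middle step is organised differently. The paper uses one probability $p=c^{1-n+\eps}$ for all $n$-sets. It handles both alternatives at once with the deterministic counting Lemma~\ref{l: suitable k-sets}: for any $s$ colourings there are $>ac^n$ \emph{suitable} $n$-sets, i.e.\ $n$-sets inside some $V_i$ that are monochromatic in some $\chi_r$, or transversals heterochromatic in every $\chi_r$. A single union bound then shows every $s$-tuple has at least $c$ suitable edges. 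You instead handle the monochromatic alternative probabilistically, through the independence number of a separate inner hypergraph. That works: the first-moment bound goes through for $\delta<\theta/(n-1)$, vertex deletion keeps the inner hypergraph induced so the bound transfers, and your count of bad transversals is right.

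Two steps need repair, though.

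First, the ``main obstacle'' is not one. The good set depends only on the colourings. After renaming colours, an $s$-tuple of colourings of the $nm$ vertices is one of at most $(nm)^{nms}=\exp(O(m\log m))$ possibilities. A fixed set of $cm^n$ transversals misses all transversal edges with probability $(1-q)^{cm^n}\le\exp(-cm^{1+\theta'})$. So the plain union bound over all $s$-tuples succeeds for every $\theta'>0$, exactly as in the paper. The coarse encoding is unnecessary, and as you describe it, it would not be correct. The partitions of the individual parts $V_i$ do not determine the good set, because badness is a colour coincidence \emph{between} two different parts. One needs the partition of all of $V$ induced by each $\chi_r$.

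Second, the deletion step is not justified. The union bound yields a good transversal edge of the hypergraph \emph{before} deletion, and this edge may contain a deleted vertex. Noting that only $o(m^n)$ transversals are lost does not help: the deleted set depends on the random edges, so it cannot be treated as fixed in the union bound.

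The simplest fix in your setting is to let the union bound range over pairs $(X,\chi)$. Here $X\sub V$ has $|X|\le m/2$ (at most $2^{nm}$ further choices), $\chi$ is an $s$-tuple of colourings of $V\setminus X$, and the good set is the set of good transversals avoiding $X$. The total count is still $\exp(O(m\log m))$. The paper avoids the issue differently: it deletes one \emph{edge} per short cycle, and proves that every tuple of colourings has at least $c$ suitable edges while there are fewer than $c$ short cycles, so a suitable edge always survives.
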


Note that, in the above, we do not require that $H$ is $n$-partite.

\begin{rem}
    In fact, Proposition \ref{p: very canonical hypergraph witness} is stronger than required -- note that in the definition of the very canonical point-Ramsey property (Definition \ref{d: vcpRP}), we do not require the monochromatic copy of $B$ to lie in some set $V_i$ of the partition.
\end{rem}

Before proving Proposition \ref{p: very canonical hypergraph witness}, we first prove an auxiliary lemma.

\begin{lem} \label{l: suitable k-sets}
    Let $n \in \N$ with $n \geq 2$. Then for all $a_1 \in (0, 1)$, there exist $a_0 \in (0, 1)$ and $C \in \N$ such that for any finite set $V$ partitioned into $n$ equal-size parts $V_0, \cdots, V_{n-1}$, each of size $c \geq C$, and for any colouring $\chi : V \to \omega$, at least one of the following holds:
    \begin{enumerate}[label=(\roman*)]
        \item \label{a_0 mono} there is $i < n$ such that $V_i$ contains $> a_0 c^n$ monochromatic $n$-sets;
        \item \label{1 - a_1 transversal} there are $> (1 - a_1)c^n$ transversal heterochromatic $n$-sets.
    \end{enumerate}
\end{lem}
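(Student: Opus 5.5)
The plan is to prove the contrapositive of \ref{a_0 mono}: I will assume that every part $V_i$ contains at most $a_0 c^n$ monochromatic $n$-sets, and from this bound the number of transversal $n$-sets that are \emph{not} heterochromatic by $a_1 c^n$. The total number of transversals is exactly $c^n$, so this gives \ref{1 - a_1 transversal}. The whole argument is elementary counting over the colour classes inside each part.

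\emph{Step 1: reduce to cross pairs.} A transversal $\{u_0, \dots, u_{n-1}\}$ with $u_i \in V_i$ fails to be heterochromatic exactly when $\chi(u_i) = \chi(u_j)$ for some $i < j$. Fix such a pair $i < j$ and a same-coloured pair $(u_i, u_j) \in V_i \times V_j$; it extends to exactly $c^{n-2}$ transversals. Hence the number of non-heterochromatic transversals is at most
\[
c^{n-2} \sum_{i<j} P_{ij}, \qquad \text{where } P_{ij} = \sum_{\alpha \in \omega} x_{i,\alpha} x_{j,\alpha}
\]
and $x_{i,\alpha} = |V_i \cap \chi^{-1}(\alpha)|$. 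Since $xy \le \tfrac12(x^2+y^2)$, we get $P_{ij} \le \tfrac12(Q_i + Q_j)$ with $Q_i = \sum_\alpha x_{i,\alpha}^2$. It therefore suffices to show that $Q_i \le \varepsilon c^2$ for every $i$, where $\varepsilon = a_1/\binom{n}{2}$. Then the number of bad transversals is at most $\binom{n}{2}\varepsilon c^n = a_1 c^n$.

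\emph{Step 2: bound $Q_i$ using few monochromatic $n$-sets.} This is the main step. Fix a threshold $\delta \in (0,1)$ and split the colour classes in $V_i$ into small ones ($x_{i,\alpha} < \delta c$) and big ones ($x_{i,\alpha} \ge \delta c$).
\begin{itemize}
\item The small classes satisfy $\sum_{\text{small}} x^2 \le \delta c \sum_\alpha x = \delta c^2$.
\item Take $C \ge 2n/\delta$. Then for a big class $x \ge \delta c \ge 2n$, so $\binom{x}{n} \ge x^n/(2^n n!)$, since each factor satisfies $x - m \ge x/2$ for $m < n$.
\item Since $x/c \ge \delta$ and $n \ge 2$, we have $(x/c)^2 \le \delta^{2-n}(x/c)^n$.
\end{itemize}
Combining the last two points gives
\[
\sum_{\text{big}} x^2 \le \delta^{2-n} c^{2-n} \sum_{\text{big}} x^n \le 2^n n!\,\delta^{2-n} c^{2-n} \sum_\alpha \binom{x_{i,\alpha}}{n} \le 2^n n!\,\delta^{2-n} a_0 c^2.
\]
The last inequality uses the assumed failure of \ref{a_0 mono}, since $\sum_\alpha \binom{x_{i,\alpha}}{n}$ counts the monochromatic $n$-sets in $V_i$. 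Thus $Q_i \le (\delta + 2^n n!\,\delta^{2-n} a_0)\, c^2$.

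\emph{Step 3: choose the constants.} Set $\delta = \varepsilon/2$ and $a_0 = \delta^{n-1}/(2^{n+1} n!)$, which lies in $(0,1)$. With these choices $Q_i \le \varepsilon c^2$, as required. Also take $C = \lceil 2n/\delta \rceil$. Note that $a_0$ and $C$ depend only on $n$ and $a_1$, as the statement requires.

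I expect the only real care to be needed in Step 2: making the lower bound $\binom{x}{n} \gtrsim x^n$ uniform. This is what forces the threshold $\delta c$ and the lower bound $C$ on $c$. Everything else is bookkeeping.
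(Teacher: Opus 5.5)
Your proof is correct, and it takes a different route from the paper.

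The paper argues by a direct dichotomy on the largest colour class. It fixes $\eps$ with $(1-n\eps)^n > 1-a_1$ and picks $a_0$ so that $\binom{c\eps}{n} > a_0 c^n$ for $c \ge C$.
\begin{itemize}
\item If some $V_i$ contains a single colour class of size $\ge c\eps$, that class alone already gives (i).
\item Otherwise every colour class in every part has size $< c\eps$. Choosing a transversal greedily part by part, you avoid at most $j$ colours in $V_j$. This gives more than $c(c-c\eps)\cdots(c-(n-1)c\eps) > (1-n\eps)^n c^n$ heterochromatic transversals.
\end{itemize}

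You instead argue contrapositively via a second-moment (collision) count. You use a union bound over pairs of parts, then $xy \le \frac12(x^2+y^2)$, then a small/big split to bound $\sum_\alpha x_{i,\alpha}^2$ by the number of monochromatic $n$-sets in $V_i$.

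The paper's argument is shorter and needs no union bound, since one large class suffices and the complementary case is a simple product count. Yours is more quantitative in two ways:
\begin{itemize}
\item It bounds the number of non-heterochromatic transversals by the aggregate number of monochromatic $n$-sets, so it handles many moderately large classes at once.
\item It produces fully explicit constants $a_0 = \delta^{n-1}/(2^{n+1}n!)$ and $C = \lceil 2n/\delta\rceil$, rather than choosing $a_0$ through a leading-coefficient argument.
\end{itemize}

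One small point of presentation. The lemma asks for strictly more than $(1-a_1)c^n$ heterochromatic transversals. Step~1 as you phrase it, using only $Q_i \le \eps c^2$, yields just $\ge (1-a_1)c^n$. However, your constants in Step~3 actually give $Q_i \le \frac32\delta c^2 = \frac34\eps c^2$. Hence there are at most $\frac34 a_1 c^n$ bad transversals, and at least $(1-\frac34 a_1)c^n > (1-a_1)c^n$ heterochromatic ones. You should state the conclusion that way.
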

\begin{proof}
    Let $\eps \in \R_+$ with $(1 - n\eps)^n > 1 - a_1$. Then there is $a_0 \in (0, 1)$ such that $\binom{x\eps}{n} - a_0 x^n > 0$ for sufficiently large $x$ (take $a_0$ so that the coefficient of $x^n$ is positive); let $C \in \N$ be such that $\binom{c\eps}{n} - a_0 c^n > 0$ for all $c \geq C$.
    
    Let $V$ and $c \geq C$ be as in the statement of the lemma. Suppose there is $i < n$ and a colour $q$ such that $V_i$ contains $\geq c \eps$ elements of colour $q$. Then there are $\geq \binom{c \eps}{n}$ $q$-coloured $n$-sets in $V_i$, and as $\binom{c \eps}{n} > a_0 c^n$, condition \ref{a_0 mono} holds.

    Otherwise, for all $i < n$, there are $< c \eps$ elements of each colour in $V_i$. So, as one sees by picking elements part by part of distinct colours, the number of heterochromatic transversals is $> c(c - c\eps) \cdots (c - (n-1)c\eps) > (c - nc\eps)^n = c^n(1 - n\eps)^n > (1 - a_1)c^n$.
\end{proof}

\begin{proof}[Proof of Proposition \ref{p: very canonical hypergraph witness}]
    Let $\eps \in \R_+$ with $\eps < \frac{1}{g}$. Let $c$ be a sufficiently large integer (to be determined later), and let $V$ be a finite set partitioned into $n$ parts $V_0, \cdots, V_{n-1}$, each of size $c$. We let $H$ be a randomly chosen $n$-uniform hypergraph with vertex set $V$, where we select edges from the set of $n$-sets of $V$ independently with probability $p = c^{1 - n + \eps}$. We take $c$ large enough so that $p < \frac{1}{2}$, and thus $p < 1 - p$.

    Let $m \in \N$, $m \geq 2$. We first count the number of ``potential $m$-cycles" of $H$. A sequence $v_0 e_0 \cdots v_{m-1} e_{m-1}$ of vertices $v_i$ and $n$-sets $e_i$ is a \emph{potential $m$-cycle} if the vertices are distinct and $\{v_i, v_{i+1}\} \sub e_i$ for all $i$ (with addition $\Mod{m}$). By first taking an $m$-tuple of vertices, and then for each $v_i, v_{i+1}$ taking $n-2$ other vertices to form an $n$-set, we see that the number of potential $m$-cycles is $< (cn)^m \binom{cn}{n-2}^m < (cn)^{m(n-1)}$. So $\mb{E}(m\text{-cycles in } H) < (cn)^{m(n-1)}p^m = n^{m(n-1)} c^{m\eps}$, and so if $m \leq g$ we have $\mb{E}(m\text{-cycles in } H) = o(c)$.

    Now take $s$ vertex-colourings $(\chi_r : H \to \omega \mid r < s)$. We say that an $n$-set is \emph{suitable} if it is contained in some $V_i$ and monochromatic in some $\chi_r$, or if it is a transversal and heterochromatic in each $\chi_r$, $r < s$. Applying Lemma \ref{l: suitable k-sets} with $a_1 = \frac{1}{2s}$, we obtain $a_0$ and $C$. We take $c \geq C$. Let $a = \min\{a_0, \frac{1}{2}\}$. Then we have $> ac^n$ suitable $n$-sets. The probability that $< c$ of $ac^n$ suitable $n$-sets are edges is \[(1-p)^{ac^n}\left(1 + \binom{ac^n}{1}\frac{p}{1-p} + \cdots + \binom{ac^n}{c-1}\left(\frac{p}{1-p}\right)^{c-1}\right) < (1-p)^{ac^n}c\binom{ac^n}{c} < (1-p)^{ac^n} c^{cn + 1},\]
    and as $1 - p < \exp(-p)$ (using the Taylor series expansion of $\exp(x)$), we have that this probability is $< \exp(-pac^n) c^{cn + 1}$. We may assume that all colourings take values in the vertex set of $H$, so there are $(cn)^{cns}$ $s$-tuples of colourings. Thus, the probability that there is an $s$-tuple of colourings with $< c$ suitable edges is \[< \exp(-pac^n) c^{cn + 1} (cn)^{cns} = \exp(-ac^{1 + \eps} + (cns + cn + 1)\log(c) + cns\log(n)),\]
    and the latter expression $\to 0$ as $c \to +\infty$.
    
    So, for sufficiently large $c$ there is some $H$ such that all $s$-tuples of colourings have $\geq c$ suitable edges, and the number of cycles of length $< g$ is $< c$. Removing one edge from each cycle of length $< g$, we obtain the desired hypergraph.
\end{proof}

\begin{defn}
    Let $\mc{L}$ be a relational language. Let $A$ be an $\mc{L}$-structure. Recall that the \emph{Gaifman graph} of $A$ is defined as follows: we define a binary relation $\Gamma^A$ on $\dom(A)$ by specifying that $\Gamma^A(u, v)$ for distinct $u, v \in A$, if $u, v$ occur in a tuple of a relation of $A$.

    Let $B_0 \xleftarrow{f_0} A \xrightarrow{f_1} B_1$ be a pair of embeddings of finite $\mc{L}$-structures. Recall that a strong amalgam $B_0 \xrightarrow{g_0} C \xleftarrow{g_1} B_1$ of $(f_0, f_1)$ is a \emph{free amalgam} of $(f_0, f_1)$ if, for each relation symbol $R \in \mc{L}$, we have $R^C = R^{g_0(B_0)} \cup R^{g_1(B_1)}$: equivalently, the Gaifman graph of $C$ is the disjoint union of the Gaifman graphs of $g_0(B_0)$, $g_1(B_1)$ over the Gaifman graph of the image of $A$. We say that a \Fr class of relational structures is a \emph{free amalgamation class} if it is closed under free amalgams.

    We say that a finite $\mc{L}$-structure $A$ is \emph{irreducible} if its Gaifman graph is a clique: equivalently, the structure $A$ is not the free amalgam of any two of its proper substructures over their intersection. (We adopt this terminology from \cite[Definition 2.1]{HN19}.)
\end{defn}

The following Lemma \ref{l: free amalg iff forb irred} is folklore (see, for example, \cite[Lemma 3.2.7]{Sin17} for a proof):
\begin{lem} \label{l: free amalg iff forb irred}
    Let $\mc{K}$ be a class of finite structures over a relational language. Then $\mc{K}$ is a free amalgamation class if and only if there exists a set $\mc{F}$ of irreducible finite $\mc{L}$-structures such that $\mc{K}$ is exactly the class of finite structures in which no element of $\mc{F}$ embeds. (In this case, we say that $\mc{K}$ is \emph{$\mc{F}$-free}.)
\end{lem}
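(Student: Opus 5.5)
We prove the two directions separately. The easy direction is that $\mc{F}$-free classes are free amalgamation classes. Let $\mc{F}$ be a set of irreducible finite structures, and let $\mc{K}$ be the class of finite structures omitting every element of $\mc{F}$.

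\begin{itemize}
\item \emph{Hereditary property and joint embedding.} $\mc{K}$ is clearly closed under isomorphism and substructures. Joint embedding is the free amalgam over the empty structure, so it is covered by the amalgamation argument below.
\item \emph{Closure under free amalgams.} Take $B_0, B_1 \in \mc{K}$ with free amalgam $C$ over $A$, and suppose some $F \in \mc{F}$ embeds into $C$, with image $F'$. Since the Gaifman graph of $C$ is the union of those of $g_0(B_0)$ and $g_1(B_1)$, glued over the image of $A$, there are no Gaifman edges between $g_0(B_0) \setminus g_1(B_1)$ and $g_1(B_1) \setminus g_0(B_0)$.
\item \emph{Locating $F'$.} The Gaifman graph of $F'$ is a clique, so $F'$ cannot meet both of these difference sets. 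Hence $F' \sub g_0(B_0)$ or $F' \sub g_1(B_1)$.
\item \emph{Conclusion.} Relations on $F'$ are then those of $g_j(B_j)$ by the definition of free amalgam, so $F$ embeds in $B_j \in \mc{K}$. This is a contradiction. (Countably many isomorphism types is automatic for a relational language if the paper's convention requires it; otherwise only closure under free amalgams is needed.)
\end{itemize}

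For the converse, let $\mc{K}$ be a free amalgamation class.

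\begin{itemize}
\item \emph{Choice of $\mc{F}$.} Let $\mc{F}$ be the set of finite structures not in $\mc{K}$ all of whose proper substructures lie in $\mc{K}$. These are the minimal forbidden structures.
\item \emph{$\mc{K}$ is exactly the class of $\mc{F}$-free structures.} Since $\mc{K}$ is hereditary, every structure in $\mc{K}$ omits $\mc{F}$. Conversely, any finite structure $X \notin \mc{K}$ contains a minimal substructure not in $\mc{K}$, and by heredity that substructure lies in $\mc{F}$.
\item \emph{Each $F \in \mc{F}$ is irreducible.} Suppose not. Then there are distinct $u, v \in F$ not Gaifman-adjacent. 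Set $B_0 = F \setminus \{v\}$, $B_1 = F \setminus \{u\}$ and $A = F \setminus \{u,v\}$.
\item \emph{Contradiction.} Since $u$ and $v$ lie in no common relation tuple, every relation tuple of $F$ lies entirely in $B_0$ or in $B_1$. So $F$ is precisely the free amalgam of $B_0$ and $B_1$ over $A$, with the inclusion maps. The structures $B_0, B_1$ are proper substructures of $F$, hence lie in $\mc{K}$, and closure under free amalgams gives $F \in \mc{K}$, a contradiction.
\end{itemize}

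The main point needing care is checking that $F$ is literally the free amalgam. The domains intersect exactly in $A$, so it is a strong amalgam. The relation condition $R^F = R^{B_0} \cup R^{B_1}$ holds because any tuple containing both $u$ and $v$ would make them Gaifman-adjacent.
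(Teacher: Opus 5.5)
The paper gives no proof of its own (it cites [Sin17, Lemma 3.2.7]). Your argument is the standard folklore proof and is correct: take $\mc{F}$ to be the minimal non-members of $\mc{K}$, and show that a reducible one would be the free amalgam of $F\setminus\{v\}$ and $F\setminus\{u\}$ over $F\setminus\{u,v\}$.

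One correction to your parenthetical: countably many isomorphism types is \emph{not} automatic for a relational language unless the language is finite. With infinitely many relation symbols there are already at least $2^{\aleph_0}$ pairwise non-isomorphic one-point structures. For instance, $\mc{F}=\varnothing$ then gives a class closed under free amalgams that is not a Fra\"{i}ss\'{e} class. So for the ``if'' direction, countability must be treated as an implicit hypothesis of the lemma, not derived.
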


\begin{defn}
    We say that a \Fr class $\mc{K}$ is \emph{transitive} if each vertex of each structure in $\mc{K}$ has the same quantifier-free type. Note that $\mc{K}$ is transitive if and only if its \Fr limit $M$ is transitive -- hence the terminology (which is admittedly a slight abuse of language).
\end{defn}

The proof of the following Proposition \ref{p: vertex-tr free amalg have finite SP} is inspired by the proof of \cite[Theorem 3.2]{NR78} (in particular, the idea therein of ``pasting a structure" into the edges of a hypergraph of large girth, which one can also find in \cite{NR78b}).

\begin{prop} \label{p: vertex-tr free amalg have finite SP}
    Let $\mc{L}$ be a relational language. Let $\mc{K}$ be a transitive free amalgamation class of $\mc{L}$-structures. Then $\mc{K}$ has the very canonical point-Ramsey property, and hence by Theorem B has the finite sunflower property.
\end{prop}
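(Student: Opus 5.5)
Given $s \in \N_+$ and $B \in \mc{K}$ with $n = |B|$, enumerate $B$ as $b_0, \dots, b_{n-1}$. The plan is to take $H$ from Proposition \ref{p: very canonical hypergraph witness} for these $n, s$ and girth $g = 3$. Girth at least $3$ means that any two distinct edges share at most one vertex. We then ``paste'' copies of $B$ into the edges of $H$ to obtain a structure $C$ on the vertex set $V$ of $H$, with the partition $V_0 \sqcup \cdots \sqcup V_{n-1}$ supplied by Proposition \ref{p: very canonical hypergraph witness}.

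\textbf{The pasting.} For each edge $e$ of $H$, choose a bijection $\phi_e : B \to e$. If $e$ is a transversal edge, take $\phi_e(b_i)$ to be the unique vertex of $e$ in $V_i$. Define the relations of $C$ so that
\[
R^C = \bigcup_{e} \phi_e(R^B) \quad \text{for each } R \in \mc{L},
\]
except that relations on single vertices (for instance tuples $(v, \dots, v)$) are imposed according to the unique vertex quantifier-free type of $\mc{K}$.

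\textbf{Consistency and $C \in \mc{K}$.} First, since two distinct edges share at most one vertex, and all vertices of $B$ have the same quantifier-free type (transitivity), the copies agree on their overlaps. Hence each $\phi_e$ is an embedding $B \to C$. Second, we need $C \in \mc{K}$. I would argue that $C$ is built by iterated free amalgamation, which also requires larger girth: choose $g$ large, say $g \geq 3$ together with ``no short cycles'' in the Berge sense, so that $H$ is a hypergraph forest locally. A cleaner route is via Lemma \ref{l: free amalg iff forb irred}: $\mc{K}$ is $\mc{F}$-free for a set $\mc{F}$ of irreducible structures, so it suffices that every irreducible substructure of $C$ lies inside a single edge.
\begin{itemize}
\item An irreducible substructure $F$ has a Gaifman clique, and every Gaifman edge of $C$ lies inside some hypergraph edge.
\item A clique whose pairs are covered by edges of a linear hypergraph, but which is not contained in one edge, would yield a Berge cycle of length $2$ or $3$ (three vertices pairwise covered by distinct edges).
\item So girth $\geq 4$ suffices: it forces $F \sub e$ for some edge $e$, hence $F$ embeds in $B \in \mc{K}$, and so $F \notin \mc{F}$.
\end{itemize}
Since singletons carry the permitted vertex type, $C$ is $\mc{F}$-free and therefore $C \in \mc{K}$. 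This is the step I expect to need the most care: getting the girth condition and the treatment of unary and diagonal relations right.

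\textbf{Verifying the property.} Given colourings $\chi_0, \dots, \chi_{s-1}$ of $C$, apply Proposition \ref{p: very canonical hypergraph witness}. Either some $\chi_r$ is monochromatic on some edge $e$ (contained in some $V_i$), or some transversal edge $e$ is heterochromatic in every $\chi_r$. In both cases $\phi_e(B)$ is a copy of $B$ in $C$: in the first case it is monochromatic for $\chi_r$, and in the second it is a transversal copy heterochromatic in all the colourings. This is exactly the very canonical point-Ramsey property, so the finite sunflower property follows from Theorem B. (The case $n = 1$ is trivial, since a single vertex is monochromatic.)
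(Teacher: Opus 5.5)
Your proposal is correct and is essentially the paper's proof: take $H$ from Proposition \ref{p: very canonical hypergraph witness} with girth $4$, paste copies of $B$ into the hyperedges, and use Lemma \ref{l: free amalg iff forb irred} together with the fact that girth $> 3$ forces every Gaifman clique, and hence every irreducible substructure, into a single $H$-copy of $B$. Fix $g = 4$ from the outset rather than starting with $g = 3$; your explicit assignment of the vertex type to every vertex, including any vertex of $H$ lying in no edge, is a small point that the paper's construction glosses over.
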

\begin{proof}
    Let $g = 4$. Let $s \in \N_+$, $B \in \mc{K}$. Let $n = |B|$. We may assume $n \geq 2$ as the case $n = 1$ is trivial. Let $H$ be an $n$-uniform hypergraph given by Proposition \ref{p: very canonical hypergraph witness} (using the stated values of $s$, $n$ and $g$). Let $C$ be an $\mc{L}$-structure with domain $\dom(H)$ given by taking each edge of $H$ and replacing it with a copy of the $\mc{L}$-structure $B$ (and not adding any other relations). As the girth of $H$ is $> 2$, each pair of distinct edges of $H$ intersects in at most one vertex, and each vertex of $B$ has the same quantifier-free type, so $C$ is well-defined. We refer to the copies of $B$ in $C$ resulting from $H$-hyperedges as \emph{$H$-copies} of $B$.
    
    Let $\mc{F}$ be a set of irreducible finite $\mc{L}$-structures such that $\mc{K}$ is the class of $\mc{F}$-free $\mc{L}$-structures (the existence of $\mc{F}$ is guaranteed by Lemma \ref{l: free amalg iff forb irred}). First observe that for $u, v \in C$, if $(u, v)$ is an edge of the Gaifman graph of $C$, then there is an $H$-copy of $B$ containing $u, v$. As the girth of $H$ is $> 3$, for $u, v, w \in C$, if $u, v, w$ form a clique in the Gaifman graph of $C$, then $u, v, w$ lie in the same $H$-copy of $B$. So $C$ is $\mc{F}$-free, and thus $C \in \mc{K}$. As $H$ satisfies the properties specified in Proposition \ref{p: very canonical hypergraph witness}, it is immediate that $C$ is a witness of the very canonical point-Ramsey property for $(s, B)$.
\end{proof}

\subsection{\texorpdfstring{$S$}{S}-metric spaces}

In this subsection, we show that certain \Fr classes of finite metric spaces have the finite sunflower property.

\begin{defn}
    Let $S \sub \R_+$ be non-empty. We say that a metric space $M$ is an \emph{$S$-metric space} if all non-zero distances lie in $S$: that is, for all distinct $a, b \in M$ we have $d(a, b) \in S$. We let $\mc{U}_S$ denote the class of all finite $S$-metric spaces.
\end{defn}

We give an extremely abbreviated treatment of basic facts around classes of finite $S$-metric spaces, based on \cite[Sect.\ 4.2.2]{HN19} (the results we present are originally from \cite{DLPS07}, \cite{Sau13}).

We first characterise which distance sets $S$ give \Fr classes of $S$-metric spaces:

\begin{defn}[{\cite{DLPS07}}]
    A non-empty set $S \sub \R_+$ satisfies the \emph{four-values condition} if, for all $b, b', c, c' \in S$ where there is $a \in S$ such that the triangles $B, C$ with distances $a, b, b'$ and $a, c, c'$ are $S$-metric spaces, there is a strong amalgam $D$ of $B, C$ over the edge of distance $a$ such that $D$ is an $S$-metric space. (Equivalently, there is $d \in S$ such that the triangles with distances $b, c, d$ and $b', c', d$ are metric spaces.)
\end{defn}

\begin{fact}[{\cite[Prop.\ 1.4]{DLPS07}}] \label{f: fvc equiv}
    Let $S \sub \R_+$ be non-empty and countable. The following are equivalent:
    \begin{itemize}
        \item $S$ satisfies the four-values condition;
        \item $\mc{U}_S$ has the amalgamation property;
        \item $\mc{U}_S$ has the strong amalgamation property.
    \end{itemize}
\end{fact}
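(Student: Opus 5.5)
The plan is to prove the cycle strong amalgamation $\Rightarrow$ amalgamation $\Rightarrow$ four-values condition $\Rightarrow$ strong amalgamation. The first implication is trivial. First I will record the equivalence of the two formulations of the four-values condition, which is a direct check. Take a two-point space $\{p,q\}$ with $d(p,q) = a$, and apexes $x, y$ with
\[
d(x,p)=b, \quad d(x,q)=b', \quad d(y,p)=c, \quad d(y,q)=c'.
\]
A strong amalgam on $\{p,q,x,y\}$ is determined by the single new distance $d = d(x,y) \in S$. The triangles $xpq$ and $ypq$ are already metric, so the amalgam is a metric space exactly when the triangles $xyp$ (distances $b,c,d$) and $xyq$ (distances $b',c',d$) are metric.

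For amalgamation $\Rightarrow$ four-values condition, I will apply amalgamation to the two triangles $B$ and $C$ over the common edge of length $a$, giving an $S$-metric amalgam $D$. If the images of the apexes $x$ and $y$ are distinct, then $D$ restricted to $\{p,q,x,y\}$ is a strong amalgam and we are done. If they are identified, then necessarily $b=c$ and $b'=c'$. In that case I will take $d=\min(b,b') \in S$ directly. The triangle with distances $b,b,\min(b,b')$ and the triangle with distances $b',b',\min(b,b')$ are both metric, since $0 < \min(b,b') \le 2\min(b,b')$ and $\min(b,b') \le \max(b,b')$.

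For the four-values condition $\Rightarrow$ strong amalgamation, I will use the standard reduction: by induction, adding the points of $B_1 \setminus A$ one at a time, it suffices to strongly amalgamate one-point extensions $A \cup \{x\}$ and $A \cup \{y\}$ over $A$. This requires a value $d(x,y) \in S$ satisfying every triangle inequality involving $x$, $y$ and some $a \in A$. That is, we need $d \in S$ lying in every interval
\[
I_a = \bigl[\,|d(x,a)-d(y,a)|,\; d(x,a)+d(y,a)\,\bigr], \qquad a \in A.
\]
All other triangles are already metric, since they lie inside $A \cup \{x\}$ or $A \cup \{y\}$. I will treat the small cases first:
\begin{itemize}
\item If $A=\varnothing$, any element of $S$ works.
\item If $A=\{a\}$, take $d=\max(d(x,a),d(y,a)) \in S$. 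This lies in $I_a$ because $|b-c| \le \max(b,c) \le b+c$.
\end{itemize}

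The main step, which I expect to be the crux, is a one-dimensional Helly argument for $|A| \ge 2$. Let $L=\max_a |d(x,a)-d(y,a)|$ be attained at $a_1$, and let $U=\min_a (d(x,a)+d(y,a))$ be attained at $a_2$. If $a_1=a_2$, use the one-point case above at $a_1$, which gives $d\in I_{a_1}=[L,U]$. Otherwise, apply the four-values condition to the triangles $x a_1 a_2$ and $y a_1 a_2$, which share the edge $a_1a_2$. This yields $d \in S$ making both triangles $x y a_1$ and $x y a_2$ metric. Hence $d \ge |d(x,a_1)-d(y,a_1)| = L$ and $d \le d(x,a_2)+d(y,a_2) = U$, so $d \in [L,U] \subseteq \bigcap_a I_a$. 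Positivity of $d$ is automatic since $S \subseteq \R_+$, and countability of $S$ plays no role beyond the ambient setting.
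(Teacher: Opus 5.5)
Your proof is correct. The paper gives no argument of its own for this fact (it is quoted from \cite{DLPS07}), and yours is the standard proof: reduce strong amalgamation to two one-point extensions $A\cup\{x\}$, $A\cup\{y\}$ of a common base, then apply the four-values condition to the base points realising $\max_a |d(x,a)-d(y,a)|$ and $\min_a (d(x,a)+d(y,a))$, which is a one-dimensional Helly argument. Your phrase ``by induction'' should say explicitly that points are peeled off both $B_1\setminus A$ and $B_0\setminus A$ in turn; the other implications, including the choice $d=\min(b,b')$ when the amalgam identifies the two apexes, are handled correctly.
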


For certain $S$, one can phrase the above result in terms of an algebraic condition:

\begin{defn}[{\cite{Sau13b}}]
    Let $S \sub \R_+$. For $a, b \in S$, we define $a \oplus b = \sup\{s \in S \mid s \leq a + b\}$.
\end{defn}
\begin{fact}[{\cite[Thm.\ 4.18]{HN19}}] \label{f: assoc iff fvc}
    Let $S \sub \R_+$ be non-empty and countable, and suppose that $a \oplus b \in S$ for all $a, b \in S$. Then $\oplus$ is associative if and only if $S$ satisfies the four-values condition (which in turn is equivalent to $\mc{U}_S$ having the strong amalgamation property by Fact \ref{f: fvc equiv}).
\end{fact}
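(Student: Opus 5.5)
We prove the two directions separately. Throughout, write $\le$-maximality of $\oplus$ as follows: since $a \oplus b \in S$ by assumption, it is the largest element of $S$ that is $\le a+b$. We first record three easy facts. First, $\oplus$ is commutative. Second, $\max(a,b) \le a \oplus b \le a+b$, since $\max(a,b) \in S$ and $\max(a,b) \le a+b$. Third, $\oplus$ is monotone, and we use it in the form: if $x \in S$ and $x \le y+z$, then $x \le y \oplus z$. Consequently a triangle with sides $a, b, a\oplus b$ is always a metric triangle.

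\emph{Four-values condition $\Rightarrow$ associativity.} Fix $a,b,c \in S$. We build two metric triangles sharing the edge $p_0p_2$ of length $a\oplus b$:
\begin{itemize}
\item one on $p_0,p_1,p_2$ with $d(p_0,p_1)=a$ and $d(p_1,p_2)=b$;
\item one on $p_0,p_2,p_3$ with $d(p_2,p_3)=c$ and $d(p_0,p_3)=(a\oplus b)\oplus c$.
\end{itemize}
Both are metric triangles by the facts above. The four-values condition then gives a strong amalgam in $\mc{U}_S$, so $d := d(p_1,p_3) \in S$ and the triangle inequalities hold. From $d \le b+c$ we get $d \le b\oplus c$. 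From $(a\oplus b)\oplus c \le a + d \le a + (b\oplus c)$, together with $(a\oplus b)\oplus c \in S$, we get $(a\oplus b)\oplus c \le a\oplus(b\oplus c)$. The mirror-image construction, or applying the same argument to $c,b,a$ and using commutativity, gives the reverse inequality.

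\emph{Associativity $\Rightarrow$ four-values condition.} Suppose the triangles $(a,b,b')$ and $(a,c,c')$ are $S$-metric. The candidate is $d := \min(b\oplus c,\ b'\oplus c') \in S$. The upper bounds $d \le b+c$ and $d \le b'+c'$ are immediate. For the lower bounds we need $d \ge |b-c|$ and $d \ge |b'-c'|$. The bounds $b\oplus c \ge |b-c|$ and $b'\oplus c' \ge |b'-c'|$ hold because $x\oplus y \ge \max(x,y)$. So the real work is the cross terms: $b\oplus c \ge |b'-c'|$ and $b'\oplus c' \ge |b-c|$.

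This is the step where associativity is used, and I expect it to be the main obstacle. Take $c' > b'$ as an example. We have $c' \le a + c$, so $c' \le a \oplus c$. We also have $a \le b+b'$, so $a \le b \oplus b'$. Monotonicity then gives
\[
c' \le (b\oplus b')\oplus c = b'\oplus(b\oplus c) \le b' + (b\oplus c),
\]
where the equality uses associativity and commutativity. Hence $b\oplus c \ge c'-b'$. The remaining cases are obtained symmetrically by swapping the roles of the primed and unprimed letters and of $b$ and $c$.

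Finally, by Fact \ref{f: fvc equiv} the four-values condition is equivalent to strong amalgamation of $\mc{U}_S$, which gives the parenthetical claim in the statement.
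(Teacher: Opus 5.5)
Your proof is correct. The paper does not prove this statement; it quotes it from \cite[Thm.~4.18]{HN19}, where it goes back to Sauer. So there is no argument in the text to compare with, and yours stands as a self-contained elementary verification.

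Both directions check out.

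In the forward direction, apply the four-values condition over the edge $p_0p_2$ of length $a\oplus b\in S$, with $a$ and $(a\oplus b)\oplus c$ measured from $p_0$ and $b$, $c$ measured from $p_2$. This yields exactly the $d\in S$ you use, with $(a,d,(a\oplus b)\oplus c)$ and $(b,c,d)$ metric. The reverse inequality, obtained from the triple $(c,b,a)$ and commutativity, is also correct.

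In the backward direction, take your displayed chain together with its images under the substitutions $b\leftrightarrow c,\ b'\leftrightarrow c'$ and $b\leftrightarrow b',\ c\leftrightarrow c'$. Both substitutions preserve the two hypothesis triangles, the two target triangles, and your candidate $d=\min(b\oplus c,\,b'\oplus c')$. Together these give all four cross inequalities, so $d$ works.

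Two cosmetic remarks:
\begin{enumerate}
\item The case assumption $c'>b'$ is unnecessary, since your chain proves $c'-b'\le b\oplus c$ unconditionally.
\item Countability of $S$ is never used, so the equivalence between associativity and the four-values condition holds for any $S\subseteq\R_+$ closed under $\oplus$.
\end{enumerate}
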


\begin{defn}[{\cite[Def.\ 4.14]{HN19}}]
    Let $\mc{L}_S = \{d_s \mid s \in S\}$ be a relational language with each $d_s$ binary. We call an $\mc{L}_S$-structure $A$ an \emph{$S$-graph} if the $d_s^A$, $s \in S$, are disjoint graph relations: that is, each $d_s^A$ is irreflexive and symmetric and, for distinct $u, v \in A$, we have $d_s^A(u, v)$ for at most one $s \in S$. We will write $d(u, v) = s$ to mean that $d_s^A(u, v)$.

    Let $A$ be an $S$-graph. We say that an $S$-graph $B$ on the same domain as $A$ is a \emph{weak $S$-subgraph} of $A$ if $d_s^B(u, v) \implies d_s^A(u, v)$ for all $u, v \in B$, $s \in S$ (we use the term ``weak" to emphasise that $A$ need not be an induced subgraph of $B$). 
    
    We consider each $S$-metric space $M$ as an $S$-graph as follows: for distinct $u, v \in M$, we define $d_s^M(u, v)$ if $d(u, v) = s$.  We say that an $S$-graph $A$ is an \emph{$S$-metric graph} if it is a weak $S$-subgraph of some $S$-metric space $M$.
\end{defn}

\begin{fact}[{\cite[Prop.\ 4.19, rephrased]{HN19}}] \label{f: S-metric check cycles}
    Let $S \sub \R_+$ be non-empty and countable, and suppose that $a \oplus b \in S$ for all $a, b \in S$ and $\oplus$ is associative. Let $A$ be a finite $S$-graph. Then $A$ is an $S$-metric graph if and only if for each cycle $a_0 \cdots a_{n-1}$ of $A$, for all $i < n$ we have:
    \[d(a_i, a_{i+1}) \leq d(a_{i+1}, a_{i+2}) + \cdots + d(a_{i-1}, a_i), \tag{$\ast$}\] where index addition is taken $\Mod{n}$.
\end{fact}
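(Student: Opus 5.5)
The plan is to prove the two directions separately. The \emph{only if} direction is routine. The \emph{if} direction goes through the shortest-path completion of \cite[Sect.\ 4.2.2]{HN19}, and its crux is an edge-preservation step.

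\emph{Only if.} Suppose $A$ is a weak $S$-subgraph of an $S$-metric space $M$. Every edge $a_ja_{j+1}$ of a cycle of $A$ carries its distance in $M$. Applying the triangle inequality in $M$ repeatedly along the path $a_{i+1}, a_{i+2}, \dots, a_i$ then gives $(\ast)$ with ordinary sums. No hypothesis on $\oplus$ is needed here.

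\emph{If.} Let $A$ be a finite $S$-graph in which every cycle satisfies $(\ast)$. I would define the completion
\[
d'(u,v)=\min\{\, d(p_0,p_1)\oplus\cdots\oplus d(p_{k-1},p_k) \mid u=p_0,\dots,p_k=v \text{ a path in } A\,\},
\]
and set $d'(u,v)=\max S$ (or any large value of $S$) when no path exists. The steps are as follows.
\begin{enumerate}[label=(\arabic*)]
\item The iterated $\oplus$-sum is unambiguous. This holds because $\oplus$ is associative by hypothesis, and it is evidently commutative and monotone in each argument.
\item Each iterated $\oplus$-sum is at most the corresponding real sum. This follows by induction, using $a\oplus b\le a+b$ together with monotonicity.
\item $d'$ satisfies the triangle inequality. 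Concatenating paths shows $d'(u,w)\le d'(u,v)\oplus d'(v,w)\le d'(u,v)+d'(v,w)$.
\item $d'$ extends $d$. For an edge $uv$ we have $d'(u,v)\le d(u,v)$ trivially, using the one-edge path. It remains to show $d(u,v)\le d'(u,v)$. Since $d'$ takes values in $S$ (we assumed $S$ is closed under $\oplus$), steps (1)--(4) then show that $(A,d')$ is an $S$-metric space containing $A$ as a weak $S$-subgraph.
\end{enumerate}

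The main obstacle is step (4). Take an edge $uv$ and any other path $u=p_0,\dots,p_k=v$; together they form a cycle. Condition $(\ast)$ gives $s:=d(u,v)\le \sum_j d(p_j,p_{j+1})$, but what is needed is $s\le \bigoplus_j d(p_j,p_{j+1})$. Upgrading a real-sum bound to an $\oplus$-sum bound is exactly where associativity has to be used. I would try first to prove by induction on $k$ that $\bigoplus_j x_j=\sup\{t\in S \mid t\le \sum_j x_j\}$, using associativity and the four-values condition of Fact \ref{f: assoc iff fvc} to merge one summand at a time.

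I expect this to fail for the statement exactly as written. Take $S=\{1,3\}$: it is closed under $\oplus$ and $\oplus$ is associative. Consider the $4$-cycle with one edge of length $3$ and three edges of length $1$; every instance of $(\ast)$ holds with real sums, since $3\le 1+1+1$. Yet any completion forces both diagonals to equal $1$, which violates the triangle inequality $3\le 1+1$.

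So in step (4) I would read the right-hand side of $(\ast)$ as the iterated $\oplus$-sum, which is the form in \cite[Prop.\ 4.19]{HN19}. With that reading, step (4) follows immediately from the cycle condition, the completion argument above goes through unchanged, and the \emph{only if} direction still holds by step (2).
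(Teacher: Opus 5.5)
Your counterexample is valid, and it shows that the Fact as rephrased in the paper, with ordinary sums on the right of $(\ast)$, is false. On $S=\{1,3\}$ one has $a\oplus b=\max(a,b)$, so the hypotheses hold. The $4$-cycle with distances $3,1,1,1$ satisfies every instance of $(\ast)$, the tightest being $3\le 1+1+1$. In any completion both diagonals are forced to be $1$, which produces a $(3,1,1)$ triangle.

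The paper gives no proof of its own, only the citation, so there is no argument of the paper's to compare with. The statement that is actually true has iterated $\oplus$-sums in $(\ast)$, as you propose, and your $\oplus$-shortest-path completion proves its \emph{if} direction. To make steps (3) and (4) fit together, note that concatenation produces walks rather than simple paths. A walk can be shortened to a simple path without increasing its $\oplus$-sum, because $x\le x\oplus y$ (as $x\in S$ and $x\le x+y$). Only then does the cycle condition apply.

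The discrepancy is not cosmetic. The proof of Proposition \ref{p: vcpRP for Sms} only checks the real-sum form of $(\ast)$, via $\Delta\le(g-1)\mu$. For $S=\{1,3\}$ the class $\mc{U}_S$ is, up to interdefinability, the class of finite equivalence relations. The paper's own negative example shows that this class lacks even the finite $2$-sunflower property. The explicitly listed metric examples survive, because there $\oplus$ is addition (truncated at $\max S$ when $S$ is bounded). So an $\oplus$-sum of $g-1$ terms, each at least $\mu$, is still at least $\Delta$.

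One justification in your last paragraph is backwards. Step (2) gives $\bigoplus_j x_j\le\sum_j x_j$. That shows the $\oplus$-form of $(\ast)$ implies the real-sum form. It does not show that an $S$-metric graph satisfies the $\oplus$-form, which is what the \emph{only if} direction now needs. The right argument uses that distances lie in $S$: from $d(x,z)\le d(x,y)+d(y,z)$ and $d(x,z)\in S$ you get $d(x,z)\le d(x,y)\oplus d(y,z)$, by the definition of $\oplus$ as a supremum. Iterating this along the path, using monotonicity of $\oplus$ and associativity to ignore bracketing, gives the $\oplus$-form of $(\ast)$.
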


We now use the above facts to show the finite sunflower property for certain classes of finite metric spaces.

\begin{prop} \label{p: vcpRP for Sms}
    Let $S \sub \R_+$ be non-empty and countable, and suppose that $a \oplus b \in S$ for all $a, b \in S$ and $\oplus$ is associative. Then $\mc{U}_S$ has the very canonical point-Ramsey property, and hence by Theorem B has the finite sunflower property.
\end{prop}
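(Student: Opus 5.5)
We follow the pattern of Proposition \ref{p: vertex-tr free amalg have finite SP}: paste copies of $B$ into the edges of a high-girth hypergraph and check that the result lies in $\mc{U}_S$. Let $s \in \N_+$ and $B \in \mc{U}_S$ with $n = |B| \geq 2$ (the case $n=1$ is trivial). Take $H$ as in Proposition \ref{p: very canonical hypergraph witness} for $n$, $s$ and a girth bound $g$ to be fixed below (say $g = n+2$ or larger, possibly depending on $S$ and $B$). Define an $S$-graph $C_0$ on $\dom(H)$ by placing a copy of $B$ on each hyperedge. Since $\mc{U}_S$ is transitive (a one-point metric space has only one type), and distinct hyperedges share at most one vertex, $C_0$ is well-defined.

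The key step is to show that $C_0$ is an $S$-metric graph, so that it is a weak $S$-subgraph of some $S$-metric space $C \in \mc{U}_S$. We should then be able to choose $C$ so that each $H$-copy of $B$ is still an induced copy of $B$. This is automatic, because distances inside a hyperedge are already specified by $C_0$ and a weak subgraph keeps those edges. By Fact \ref{f: S-metric check cycles}, it suffices to verify $(\ast)$ for every cycle of $C_0$. The plan is to decompose a cycle of $C_0$ into maximal segments lying within single $H$-copies. If the cycle lies entirely in one $H$-copy, $(\ast)$ holds because $B$ is a metric space. Otherwise the sequence of hyperedges visited by the cycle gives a closed walk in $H$. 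Since $H$ has large girth (Berge cycles of length $<g$ are absent), this walk must backtrack: some hyperedge $e$ is entered and left at the same vertex, or the walk passes through a tree-like configuration.

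Concretely, I would argue by induction on cycle length. The hypergraph of hyperedges used by a cycle of length $< g$ forms a "hypertree", so the cycle leaves some hyperedge $e$ and returns to $e$ through the same cut vertex. The portion of the cycle outside $e$ then forms a closed subwalk based at a single vertex, and in $(\ast)$ it can only add nonnegative length to the right-hand side. The remaining cases use the triangle inequality in $B$ to shortcut segments within $e$. This reduces to shorter cycles. For cycles of length $\geq g$ the girth does not help directly, and this is the main obstacle. One fix is to avoid long cycles entirely: replace $C_0$ by its shortest-path completion. Set the distance between $u,v$ to be $\min$ of the $\oplus$-sums along paths, truncated at $\max S$ if $S$ is bounded, or use the free completion of \cite{HN19}. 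Then show this completion agrees with $B$ on each $H$-copy. For that, we need that any path leaving and re-entering a hyperedge $e$ gives distance at least the $B$-distance. The hypertree structure ensures that such a path re-enters $e$ at the vertex where it left, unless it is long. If it is long (length $\geq g$), its $\oplus$-sum is at least $g\cdot\min S$, so choosing $g$ with $g \min S \geq \operatorname{diam}(B)$ handles it. If $\inf S = 0$ this needs more care: either use associativity of $\oplus$ to argue via the closure property, or note that only the finitely many distances of $B$ matter and bound below by the smallest one. I expect this to be the delicate point.

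Once $C \in \mc{U}_S$ contains every $H$-copy of $B$ as an induced copy, the very canonical point-Ramsey property for $(s,B)$ follows immediately from Proposition \ref{p: very canonical hypergraph witness}, using the partition $V_0 \sqcup \cdots \sqcup V_{n-1}$. A monochromatic edge inside some $V_i$ yields a monochromatic copy of $B$, and a transversal heterochromatic edge yields a transversal copy that is heterochromatic in every $\chi_r$. Theorem B then gives the finite sunflower property.
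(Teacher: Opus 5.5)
\textbf{There is a genuine gap, and it cannot be patched, because the statement is false in this generality.}

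The step that fails is your lower bound for long excursions: ``its $\oplus$-sum is at least $g\cdot\min S$'', or in your fallback, at least $g$ times the least distance $\mu$ of $B$. The operation $\oplus$ is truncated addition and can stall. Take $S=\{1,3\}$. It satisfies every hypothesis: $1\oplus 1=1$ and $1\oplus 3=3\oplus 3=3$, so $\oplus=\max$, which is associative. Yet a path of distance-$1$ edges, however long, has $\oplus$-sum $1<3$.

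This $S$ is in fact a counterexample to the proposition. In a $\{1,3\}$-metric space distance $1$ is transitive, since a $1,1,3$ triangle violates the triangle inequality. So $\mc{U}_{\{1,3\}}$ is just the class of finite equivalence relations. Let $B$ be three points with distances $1,3,3$. For any $C\in\mc{U}_{\{1,3\}}$, colour each class of $C$ with its own colour. This gives neither a monochromatic nor a heterochromatic copy of $B$, so already the case $s=1$ of Definition~\ref{d: vcpRP} fails. This is the paper's own negative example $\mb{E}$ in Section~\ref{s: finite SP}.

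Separately, your short-cycle discussion is muddled. A cycle has distinct vertices, so it never leaves and re-enters a hyperedge at the same vertex. The right observation is this: distinct $H$-copies share at most one vertex and $H$ has no Berge cycles of length $<g$. Hence, after shortcutting consecutive edges lying in a common copy, any cycle of $C_0$ not inside a single $H$-copy has at least $g$ edges.

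\textbf{Comparison with the paper.} The paper's proof uses no completion and no case distinction on $\inf S$. Let $\mu,\Delta$ be the least and greatest distances in $B$. The paper chooses $g$ with $\Delta\le(g-1)\mu$ and applies Fact~\ref{f: S-metric check cycles} directly. Every edge of $C_0$ has length in $[\mu,\Delta]$, and the only cycles needing a check have at least $g$ edges, so $(\ast)$ holds. This is the ``bound below by the smallest distance of $B$'' idea you reach only at the end, and it makes long cycles the easy case rather than the main obstacle.

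However, this works only because Fact~\ref{f: S-metric check cycles} is stated there with ordinary sums in $(\ast)$, and that is false in this generality. For $S=\{1,3\}$, take the $4$-cycle $a_0a_1a_2a_3$ with $d(a_0,a_1)=d(a_1,a_2)=d(a_2,a_3)=1$ and $d(a_3,a_0)=3$. It satisfies $(\ast)$ with ordinary sums. But any completion forces $d(a_0,a_2)=1$, and then $d(a_0,a_3)=3>2$.

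The valid cycle criterion uses $\oplus$-sums. With it, both your argument and the paper's go through once you add a hypothesis allowing a choice of $g$ with $\mu\oplus\cdots\oplus\mu\ge\Delta$ ($g-1$ summands). One sufficient condition is that $\oplus$ is addition truncated at $\sup S$. This holds for $\Q_+$, $\Q\cap(0,1]$, $\N_+$ and $\{1,\dots,n\}$, which are exactly the examples listed in Example~\ref{e: classes w fSP}.
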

\begin{proof}
    Let $s \in \N_+$. Let $B \in \mc{U}_S$, and let $n = |B|$. We may assume $n \geq 2$, as the case $n = 1$ is trivial. Let $\Delta$ be the diameter of $B$ (the maximum distance between two points of $B$), and let $\mu$ be the minimum distance between two distinct points of $B$. Let $g \in \N$, $g > 2$, be such that $\Delta \leq (g - 1)\mu$. Let $H$ be an $n$-uniform hypergraph given by Proposition \ref{p: very canonical hypergraph witness} (using the stated values of $s$, $n$ and $g$), and let $C$ be an $\mc{L}_S$-structure with domain $\dom(H)$ given by taking each edge of $H$ and replacing it with a copy of the $\mc{L}_S$-structure $B$ (and not adding any other relations). As the girth of $H$ is $> 2$, each pair of distinct edges of $H$ intersects in at most one vertex, so $C$ is well-defined. Note that $C$ is an $S$-graph. We refer to the copies of $B$ in $C$ resulting from $H$-hyperedges as \emph{$H$-copies} of $B$.

    It now suffices to show that the $S$-graph $C$ is an $S$-metric graph. We use Fact \ref{f: S-metric check cycles}: it is straightforward to see that we need only check cycles $v_0 \cdots v_{n-1}$ where there are $H$-copies $B_0, \cdots, B_{n-1}$ of $B$ such that $v_i \in B_i \cap B_{i+1}$ for all $i < n$ (with index addition $\Mod{n}$). We have $\{v_i, v_{i+1}\} \sub B_{i+1}$ and thus $d(v_i, v_{i+1}) \leq \Delta$ for all $i < n$, and so condition ($\ast$) in Fact \ref{f: S-metric check cycles} holds by how we chose $g$. So $C$ is an $S$-metric graph, and thus is a weak $S$-subgraph of some $C' \in \mc{U}_S$, which witnesses the very canonical point-Ramsey property for $B$.
\end{proof}

\subsection{Examples of classes with the finite sunflower property}

\begin{eg} \label{e: classes w fSP}
    We now give some examples of \Fr classes with the finite sunflower property.
    \begin{itemize}
        \item As observed in the introduction, if a \Fr structure $M$ has the infinite sunflower property then its age has the finite sunflower property by a standard Ramsey-theoretic compactness argument (see \cite[Thm.\ 53]{AKM25}). So the classes of finite graphs, finite tournaments, finite oriented graphs, finite hypergraphs, finite posets, finite ordered graphs and finite meet-semilattices have the finite sunflower property (see Example \ref{e: inf SP by galah}, Example \ref{e: inf SP by loc repl} and Subsection \ref{ss: meet-semilattice}). Likewise, any transitive \Fr class with the $3$-DAP has the finite sunflower property (see Proposition \ref{p: 3-DAP implies galah}): for example, the class of finite $K_n^{(r)}$-free $r$-uniform hypergraphs with $n > r \geq 3$.
        \item Using Proposition \ref{p: vertex-tr free amalg have finite SP}, the classes of finite $K_n$-free graphs and finite Henson oriented graphs (where one forbids a certain set of tournaments) have the finite sunflower property.
        \item By Fact \ref{f: fvc equiv}, Fact \ref{f: assoc iff fvc} and Proposition \ref{p: vcpRP for Sms}, the following classes of finite metric spaces have the finite sunflower property:
        \begin{itemize}
            \item the class of finite metric spaces with rational distances (the \Fr limit is the rational Urysohn space);
            \item the class of finite metric spaces with distances in $\Q \cap [0, 1]$ (the \Fr limit is the rational Urysohn sphere);
            \item the class of finite metric spaces with integer distances;
            \item for each $n \in \N_+$, the class of finite metric spaces with distance set $\{0, \cdots, n\}$.
        \end{itemize} 
    \end{itemize}
    In particular, the class of $K_n$-free graphs gives an example of a class with the finite sunflower property whose \Fr limit does not have the infinite sunflower property.
\end{eg}

\begin{eg} \label{e: rb triangle-free}
    The classes in the first bulletpoint of Example \ref{e: classes w fSP} have indivisible \Fr limits. We now give an example (adapted from \cite[Ex.\ 11.2]{Sau20}) of a \Fr class with the finite sunflower property whose \Fr limit is divisible, and where the divisibility can be quickly proved. For this example one can easily show failure of the galah property, but we regard divisibility as being of independent interest. (Note that the rational Urysohn space and some generic Henson oriented graphs are also divisible: see \cite{Hjo08}, \cite{DLPS07}, \cite{DLPS08} and \cite{ES93}.)

    Let $\mc{L} = \{R, B\}$ be a language where $R, B$ are binary relation symbols, and let $\mc{K}$ be the class of finite $\mc{L}$-structures $A$ such that each of $R^A$, $B^A$ is a triangle-free graph relation and $R^A \cap B^A = \varnothing$. Informally, the class $\mc{K}$ consists of the finite  ``red/blue-edge-coloured graphs without monochromatic triangles" (note that a triangle containing a red edge and a blue edge is permitted). It is immediate that $\mc{K}$ is a transitive free amalgamation class, so it has the finite sunflower property by Proposition \ref{p: vertex-tr free amalg have finite SP}. Let $M$ be the \Fr limit of $\mc{K}$. It is straightforward to show that $M$ does not have the galah property, by an analogous argument to that of Example \ref{e: implications strict}\ref{ex: Hn} (take the red neighourhood of a vertex and the complement of this set), and so $M$ does not have the infinite sunflower property by Proposition \ref{p: equiv cipRP} and Theorem A.

    We now show that $M$ is divisible. One can show this using the general result \cite[Theorem 2.3]{Sau20}; we present a direct argument for the convenience of the reader, extracted from \cite[Sect.\ 10.1, Ex.\ 11.2]{Sau20}.
    
    For $u, v \in M$, we say that $uv$ is an \emph{edge} of $M$ if $R(u, v) \vee B(u, v)$. Let $v_0, \cdots$ be an enumeration of the domain of $M$.  Define:
    \begin{align*}
        C &= \{v_n \in M \mid (\ex i < n)(R(v_i, v_n) \wedge \all j < i, v_j v_i \text{ is not an edge})\},\\
        D &= \{v_n \in M \mid (\ex i < n)(B(v_i, v_n) \wedge \all j < i, v_j v_i \text{ is not an edge})\},\\
        E &= M \setminus (C \cup D).
    \end{align*}
    It is straightforward to see that $E$ contains no edges. Suppose for a contradiction that $C$ contains a copy of $M$. Then there exists $v_n \in C$ whose blue-neighbourhood in $C$ contains a copy $H$ of the generic red triangle-free graph (that is, each edge of $H$ is red and the red graph structure of $H$ is isomorphic to the generic triangle-free graph). Each $v \in H$ is red-adjacent to some $v_i$ with $i < n$; giving $v$ colour $i$, we have a vertex-colouring of $H$ in finitely many colours. But $H$ is age-indivisible: this is Folkman's theorem (\cite{Fol70}), or alternatively follows from the well-known \NR theorem which states that the generic order expansion of a free amalgamation class has the Ramsey property (see \cite{NR77}). So some colour set of $H$ contains a red edge, contradiction. An analogous argument swapping the colours shows that $D$ does not contain a copy of $M$, and so $M$ is divisible. 
\end{eg}
          
\subsection{Negative examples}
          
We will now give two examples of \Fr classes which do not have the finite sunflower property. We shall use the following two lemmas, the first of which is folklore (its proof is straightforward and left to the reader):

\begin{lem} \label{l: v-t age-indiv iff vR}
    Let $M$ be a transitive relational \Fr structure. Then $M$ is age-indivisible if and only if $\Age(M)$ has the \emph{point-Ramsey property}, that is:
    \begin{enumerate}
        \item[] for all $B \in \mc{K}$ there exists $C \in \mc{K}$ such that for each colouring $\chi : C \to 2$, there is a monochromatic copy of $B$.
    \end{enumerate}
\end{lem}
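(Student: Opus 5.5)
Both directions go through the standard compactness argument linking countable colourings of $M$ to finite colourings of members of $\Age(M)$. Write $\mc{K} = \Age(M)$. Transitivity of $M$ is used in one place only: every single vertex of $M$, viewed as a one-point substructure, is isomorphic to every other. (It is also what makes the 2-colouring on the age side the natural counterpart of a vertex partition of $M$.)

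\emph{Point-Ramsey implies age-indivisible.} Take a partition $(C, D)$ of $M$ and suppose $\Age(C) \neq \mc{K}$ and $\Age(D) \neq \mc{K}$. Then there are $B_0 \in \mc{K} \setminus \Age(C)$ and $B_1 \in \mc{K} \setminus \Age(D)$. Since $\mc{K}$ has joint embedding (it is a Fra\"{i}ss\'{e} class), there is $B \in \mc{K}$ into which both $B_0$ and $B_1$ embed. Let $C_B \in \mc{K}$ be given by the point-Ramsey property for $B$, and fix a copy of $C_B$ inside $M$. Colour this copy by membership in $C$ or $D$. We obtain a copy of $B$ lying entirely in $C$ or entirely in $D$. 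Therefore $B_0$ embeds in $C$ or $B_1$ embeds in $D$, which is a contradiction.

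\emph{Age-indivisible implies point-Ramsey.} Suppose instead that some $B \in \mc{K}$ has no witness $C$. Enumerate $M$ as the union of an increasing chain of finite substructures $M_0 \sub M_1 \sub \cdots$. For each $m$ there is a bad colouring $M_m \to 2$, meaning one with no monochromatic copy of $B$. Restrictions of bad colourings are bad, so the bad colourings form a finitely branching tree under restriction, and this tree has infinitely many nodes. By K\H{o}nig's lemma it has an infinite branch, which gives a colouring $\chi : M \to 2$ with no monochromatic copy of $B$ in $M$. Let $(C, D) = (\chi^{-1}(0), \chi^{-1}(1))$. Neither $C$ nor $D$ contains a copy of $B$, so neither part has age $\mc{K}$. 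This contradicts age-indivisibility.

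The only delicate points are the joint-embedding step in the first direction and the K\H{o}nig compactness in the second; neither is a real obstacle. I would remark that transitivity is not needed for either direction as written. Its role in the intended application is presumably to guarantee that the statement is non-vacuous: without it, colouring vertices by their type would trivially break both properties.
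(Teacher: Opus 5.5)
Your proof is correct. The paper calls this lemma folklore and leaves the proof to the reader; your argument is the standard one: joint embedding plus colouring a copy of the witness inside $M$ by the partition for one direction, and a K\H{o}nig's lemma compactness argument for the other.

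There is one small inconsistency. Your opening claim that transitivity is ``used in one place'' is not borne out by anything you wrote, and your closing remark is the accurate one. Neither direction uses transitivity, nor ultrahomogeneity: joint embedding holds for the age of any structure, and only countability is needed to run K\H{o}nig. Without transitivity, both sides fail anyway, by colouring vertices according to their type.
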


\begin{lem} \label{l: finite 2SP implies canon vR}
    Let $\mc{K}$ be a \Fr class with the finite $2$-sunflower property. Then $\mc{K}$ has the point-Ramsey property.
\end{lem}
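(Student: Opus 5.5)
The plan is to encode a $2$-colouring of points as a structure on $2$-sets, exactly as in the proof of Lemma \ref{l: inf 2SP implies cipRP} and Theorem B\ref{i: finite 2SP implies cpR}, but arranged so that a sunflower can never have empty core. The heterochromatic alternative then cannot occur, and we are left with a monochromatic copy.

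Fix $B \in \mc{K}$ and take a witness $C$ for $B$ of the finite $2$-sunflower property; we claim the same $C$ witnesses the point-Ramsey property for $B$. Let $\chi : C \to 2$ be a colouring, and assume that the domain of $C$ is disjoint from $\{0, 1\}$. Define a structure on $2$-sets $C'$ with domain $\{\{v, \chi(v)\} \mid v \in C\}$, with relations chosen so that $f : C \to C'$, $f(v) = \{v, \chi(v)\}$, is an isomorphism. Since $f$ is injective, $C'$ is a legitimate structure on $2$-sets isomorphic to $C$. By the choice of $C$, there is a structured sunflower $S \sub C'$ with $S \cong B$ and some core $c$.

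Now we split into two cases according to $|B|$.

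\emph{Case $|B| \geq 3$.} Every petal of $S$ has the form $\{v, \chi(v)\}$. Distinct petals have distinct first coordinates $v$, so two petals can only intersect in their colour coordinate. By the pigeonhole principle, among three petals two share a colour, so the core $c$ is nonempty. Hence $c = \{r\}$ for some $r \in \{0,1\}$, and every petal contains $r$. Therefore $f^{-1}(S)$ is a copy of $B$ in $C$ on which $\chi$ is constantly $r$.

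\emph{Case $|B| \leq 2$.} This is the main, if minor, obstacle, since a two-petal sunflower imposes no condition at all. We handle it by enlarging $B$: choose $B^+ \in \mc{K}$ containing a copy of $B$ with $|B^+| \geq 3$. Such $B^+$ exists because the \Fr limit is infinite, so $\mc{K}$ contains arbitrarily large structures, and by joint embedding one of them contains $B$. We then run the argument above with $B^+$ in place of $B$. A monochromatic copy of $B^+$ contains a monochromatic copy of $B$, so the witness $C$ for $B^+$ also works for $B$.

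This gives the point-Ramsey property for $\mc{K}$.
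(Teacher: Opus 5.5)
Your proof is correct and is essentially the paper's argument with one step inlined. The paper first cites part (i) of Theorem B, whose proof is exactly your encoding $v \mapsto \{v, \chi(v)\}$, to obtain the canonical point-Ramsey property. It then enlarges $B$ to have at least $3$ elements and notes that a $2$-colouring admits no heterochromatic copy, which is precisely your pigeonhole argument forcing a nonempty core.
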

\begin{proof}
    By part \ref{i: finite 2SP implies cpR} of Theorem B, the class $\mc{K}$ has the canonical point-Ramsey property. We now show that the canonical point-Ramsey property implies the point-Ramsey property. Given $B \in \mc{K}$, we may assume $|B| \geq 3$ (embedding $B$ inside a larger element of $\mc{K}$ if necessary). Let $C$ be a witness of the canonical point-Ramsey property for $B$, and let $\chi : C \to 2$. Then we may consider $\chi$ as a colouring $C \to \omega$, and we are guaranteed a monochromatic copy of $B$, as no two-colour heterochromatic copy of a structure of size $\geq 3$ can exist.
\end{proof}

\begin{eg}
    We now give three examples of \Fr classes which do not have the finite 2-sunflower property. The \Fr limit of \ref{i: ex eq rln} is indivisible, the \Fr limit of \ref{i: ex 2 eq rlns} is age-indivisible but not indivisible, and the \Fr limit of \ref{i: ex S2} is not age-indivisible. (The failure of the point-Ramsey property for \ref{i: ex S2} is folklore. The failure of indivisibility for \ref{i: ex 2 eq rlns} is essentially \cite[Example 2.15]{GPS23}.)

    \begin{enumerate}[label=(\roman*)]
        \item \label{i: ex eq rln} \emph{An equivalence relation consisting of infinitely many infinite equivalence classes}: let $\mb{E}$ be the structure defined in Example \ref{e: implications strict}\ref{ex: eq rln}. The colouring produced by giving each equivalence class of $\mb{E}$ a different colour shows that $\mb{E}$ does not have the canonical point-Ramsey property, and so $\Age(\mb{E})$ does not have the canonical point-Ramsey property (here we use a standard Ramsey-theoretic compactness argument). Thus $\Age(\mb{E})$ does not have the finite $2$-sunflower property by Lemma \ref{l: finite 2SP implies canon vR}. The indivisibility of $\mb{E}$ is an easy check.
        \item \label{i: ex 2 eq rlns} \emph{The free superposition of two equivalence relations, each consisting of infinitely many infinite equivalence classes}: let $\mc{L} = \{E_0, E_1\}$ be a relational language with $E_0$, $E_1$ binary, and let $\mc{K}$ be the class of finite $\mc{L}$-structures $A$ where $E_0^A$, $E_1^A$ are both equivalence relations. Then $\mc{K}$ is a strong amalgamation class: denote its \Fr limit by $M$. It is straightforward to see that $M$ is isomorphic to the structure $N$ on the Cartesian product $\N^3$ given by: for $u, v \in \N^3$, we specify $E_0^N(u, v)$ if $u_0 = v_0$ and $E_1^N(u, v)$ if $u_1 = v_1$. The partition of $N$ given by $\{v \in \N^3 \mid v_0 < v_1\}$ and its complement shows that $N$ is not indivisible. The age-indivisibility of $N$ follows straightforwardly from the product Ramsey theorem for sets (see \cite[Ch.\ 5, Thm. 5]{GRS91}), and hence by Lemma \ref{l: v-t age-indiv iff vR} the class $\mc{K}$ has the point-Ramsey property. But the colouring $\chi : N \to \omega$, $\chi(u) = u_0$ has no monochromatic or heterochromatic copy of the substructure of $N$ induced on $\{(0, 0, 0), (1, 0, 0), (1, 1, 0), (0, 1, 0)\}$. So, arguing as in example \ref{i: ex eq rln}, the class $\mc{K}$ does not have the finite $2$-sunflower property.
        \item \label{i: ex S2} \emph{The dense local order} $\mb{S}(2)$: this is a tournament whose domain consists of the points on the unit circle with rational argument, and where $u \ra v$ if the angle subtended at the origin by the anticlockwise arc from $u$ to $v$ is $< \pi$. The tournament $\mb{S}(2)$ is one of the three countable ultrahomogeneous tournaments, the other two being the random tournament and $(\Q, <)$ -- see \cite{Lac84}, \cite[Section 3.3]{Cam90}. To see that $\mb{S}(2)$ is not age-indivisible, observe that it contains a directed $3$-cycle as a substructure, and consider the partition of $\mb{S}(2)$ given by the out-neighbourhood of a vertex and the complement of this set: each of these is linearly ordered by the tournament relation, and so cannot contain a directed $3$-cycle. Thus by Lemma \ref{l: v-t age-indiv iff vR} and Lemma \ref{l: finite 2SP implies canon vR} the age of $\mb{S}(2)$ does not have the finite $2$-sunflower property.
    \end{enumerate}
\end{eg}
              
\section{Further questions} \label{s: further qns}

We regard the most pressing question as being whether one can characterise the finite sunflower property in a similar manner to the infinite sunflower property. Recall the definition of the canonical point-Ramsey property from Definition \ref{d: cpRP}, and recall that we showed in part \ref{i: finite 2SP implies cpR} of Theorem B that the finite $2$-sunflower property implies the canonical point-Ramsey property. We make the following conjecture:

\begin{conj} \label{cj: fin SP iff cvRP}
    Let $\mc{K}$ be a \Fr class. We conjecture:
    \begin{center}
        $(\ast)$ \quad $\mc{K}$ has the canonical point-Ramsey property $\iff$ $\mc{K}$ has the finite sunflower property.
    \end{center}
    
    We specifically conjecture that $\mc{K}$ has the canonical point-Ramsey property if and only if $\mc{K}$ has the very canonical point-Ramsey property, which would imply $(\ast)$ by Theorem B. We also conjecture that $\mc{K}$ has the finite $2$-sunflower property if and only if $\mc{K}$ has the finite sunflower property: note that part \ref{i: finite 2SP implies cpR} of Theorem B together with $(\ast)$ would imply this.
\end{conj}
              
It would also be interesting to know which indivisible \Fr structures have the galah property, out of those that have already been considered in the literature. For example:

\begin{qn}
    Which Henson oriented graphs have the galah property? Which \Fr structures with free amalgamation have the galah property? Which \Fr metric spaces have the galah property? (See \cite{ES93}, \cite{Sau20}, \cite{DLPS07}.)
\end{qn}
           
\section*{Acknowledgements}
              
The first author would like to thank Lionel Nguyen Van Th\'{e} for some enlightening conversations regarding canonical Ramsey properties, Jan Hubi\v{c}ka for telling him about Example \ref{e: rb triangle-free}, and Samuel Braunfeld for a number of helpful comments on a draft of this paper (in particular, the suggestion to consider $S$-metric spaces). We would like to thank Nathanael Ackerman and Mostafa Mirabi for several discussions during the course of this project (see Remark \ref{r: AKM}) -- together with Leah Karker, they are currently investigating infinite structured sunflowers in the uncountable context, without necessarily assuming ultrahomogeneity. Finally, we would like to thank Shujie Yang for her illustration of a galah (Figure \ref{f:galah}).

\bibliographystyle{alpha}
\bibliography{references}

\end{document}